\newtheorem{theorem}{Theorem}
\newtheorem{lemma}{Lemma}
\newtheorem{proposition}{Proposition}
\newtheorem{remark}{Remark}
\newtheorem{example}{Example}
\begin{document}


\title[W-like maps with various instabilities]{$W$-like maps with various instabilities of acim's }
\thanks{The research of the author is supported by NSERC grants. }
\author{Zhenyang Li}
\address{Department of Mathematics and Statistics, Concordia University,
1455 de Maisonneuve Blvd. West, Montreal, Quebec H3G 1M8, Canada}
\email{zhenyangemail@gmail.com}

\subjclass[2000]{37A05, 37E05}
\date{\today }
\keywords{ absolutely continuous invariant measures, $W$-like map, instability of acim, lower bound for invariant density.
}

\begin{abstract}
This paper generalizes the results of \cite{Li} and then provides an interesting example. We construct a family  of $W$-like maps $\{W_a\}$ with a turning fixed point having slope $s_1$ on one side and $-s_2$ on the other. Each $W_a$ has an absolutely continuous invariant measure $\mu_a$.  Depending on whether $\frac{1}{s_1}+\frac{1}{s_2}$ is larger, equal or smaller than 1, we show that the limit of $\mu_a$ is a singular measure, a combination of singular and absolutely continuous measure  or an absolutely continuous measure, respectively. It is known that the invariant density of a single piecewise expanding map  has a positive lower bound on its support. In Section \ref{S:example} we give an example showing that in general,  for a family of piecewise expanding maps with slopes larger than 2 in modulus and converging to a piecewise expanding map,  their invariant densities do not necessarily have a positive lower bound on the support.
\end{abstract}

\maketitle

\pagestyle{myheadings}
\markboth{\textsc{ Families of piecewise expanding maps}}
{\textsc{Zhenyang Li}}

\section{Introduction}

In practice, due to external noise, or roundoff errors in computation, there is a natural interest in the stability of properties of chaotic dynamical systems under small perturbations. If we consider a family of piecewise expanding maps $\tau_a:I\rightarrow I$, $a > 0$ with absolutely continuous invariant measures (acim's) $\mu_a$, converging to a piecewise expanding map $\tau_0$ with acim $\mu_0$, then
under general assumptions $\mu_a$'s converge to $\mu_0$. One such assumption is that $\inf|\tau'_a|>2$ for all $a > 0$ (see \cite{BS}, \cite{G2}, \cite{G3}
or \cite{KL99}). This is useful in the study of the metastable systems \cite{THW}, or to approximate the invariant densities \cite{G4}.

Keller \cite{K} introduced the family  of $\{W_a\}$ maps that are piecewise expanding, ergodic transformations with a ``stochastic singularity", i.e., $\mu_a$'s converge to a singular measure. This occurs because of the existence of diminishing invariant neighborhoods of the turning fixed  point. The slopes of the Keller's $W_a$ maps converge to 2 and -2 on the left and right hand sides of the turning fixed point, respectively.

Given two numbers, $s_1$ and $s_2$, greater than 1, we consider a $W$-like map with one turning fixed  point having slope $s_1$ on one side and $-s_2$ on the other.
In \cite{Li}, the authors considered the special case where $s_1=s_2=2$. Their perturbed maps $W_a$ are piecewise expanding with slopes strictly greater than 2 in modulus and are exact with their acim's supported on all of [0, 1]. The standard bounded variation method \cite{BG} cannot be applied in this setting as the
slopes of the maps in that family are not uniformly bounded away from 2. Other methods, for example,  those studied in \cite{DFS}, \cite{Ko} and \cite{M} cannot be  applied either. Using the main result of \cite{G1},  it can be shown that the $\mu_a$'s converge to $\frac2 3\mu_0+\frac1 3\delta_{(\frac1 2)}$, where $\delta_{(\frac1 2)}$ is the Dirac measure at point $1/2$ and $\mu_0$ is the acim of the $W_0$ map. Thus, the family of measures $\mu_a$ approach a combination of an absolutely continuous and a singular measure rather than the acim of the limit map. Similar instability was also shown in \cite{EM} for a countable family of transitive Markov maps approaching Keller's $W_0$ map.

In this paper, we construct a family of maps for which the instability of the acim's has a global character, not a local one.
In the more general case considered in this paper, with $s_1$, $s_2$ not necessarily equal to 2, we will discuss the limits of the acim's $\mu_a$ of the $\{W_a\}$ maps. We have three cases:\\
(I) If $\frac{1}{s_1}+\frac{1}{s_2}>1$, then $\mu_a$'s converge $*$-weakly to $\delta_{(\frac 12)}$.\\
(II) If $\frac{1}{s_1}+\frac{1}{s_2}=1$, then $\mu_a$'s converge $*$-weakly to \[\frac{(qs_1+ps_2-p-q)(s_2+2)}{(qs_1+ps_2-p-q)(s_2+2)+2rs_1s_2^2}\mu_0+\frac{2rs_1s_2^2}{(qs_1+ps_2-p-q)(s_2+2)+2rs_1s_2^2} \delta_{(\frac 12)},\]
where $p$, $q$ and $r$ are parameters defining our family of maps.\\
(III) If $\frac{1}{s_1}+\frac{1}{s_2}<1$, then $\mu_a$'s converge to $\mu_0$.\\
Additionally, in Theorem \ref{way2}, we prove that in case (III) the densities of the $\mu_a$'s are uniformly bounded. The first case of our result contains the example in which Keller \cite{K} obtained the ``stochastic singularity." In the second case, the limit measure is a combination of an absolutely continuous and a singular measure, and this combination is varying according to $p$, $q$ and $r$ for fixed $s_1$ and $s_2$. This is a generalization of the result of \cite{Li}. In the third case, we have a map with a stable acim.

At the end of the paper, we use our main results to provide an interesting example. Keller \cite{K78} and Kowalski \cite{Ko} proved that for a piecewise expanding map $\tau: I\rightarrow I$ with $\frac{1}{|\tau'(x)|}$ being a function of bounded variation, the density of the acim of $\tau$ has a uniform positive lower bound on its support. We construct a family of piecewise expanding, piecewise linear maps $\tau_n$ such that $\tau_n$ are exact on $[0,1]$, $\tau_n$ converge to $\tau=W_0$ ($s_1=s_2=2$), $|\tau_n'|>2$ for all $n$ but the densities of the acims $\mu_n$'s do not have a uniform positive lower bound.

In Section \ref{S:main}, we introduce our family of $W_a$ maps and state the main result. In Section \ref{S:proofs} we present the proofs. In Section \ref{S:example}, we show the example related to the results of Keller \cite{K78} and Kowalski \cite{Ko}.

\section{Family of $W_a$ maps and the main result}\label{S:main}
Let $s_1$, $s_2>1$ and $p$, $q$, $r>0$. We consider the family $\{W_a: 0\le a\}$ of maps of $[0,1]$ onto itself defined by
\begin{equation}
W_a(x)=\begin{cases} 1-\frac{2(s_1+pa)}{s_1-1+pa-2ra}x \ ,\ \text{\ for\ } \ 0\le x<\frac 1 2-\frac{\frac 1 2 +ra}{s_1+pa}\ ;\\
                     (s_1+pa)(x-1/2)+1/2+ra \ ,\ \text{\ for\ } \ \frac 1 2-\frac{\frac 1 2 +ra}{s_1+pa}\le x<1/2\ ;\\
                      -(s_2+qa)(x-1/2)+1/2+ra \ ,\ \text{\ for\ } \ 1/2\le x< \frac 1 2+\frac{\frac 1 2 +ra}{s_2+qa}\ ;\\
                     1+\frac{2(s_2+qa)}{s_2-1+qa-2ra}(x-1) \ ,\ \text{\ for\ } \ \frac 1 2+\frac{\frac 1 2 +ra}{s_2+qa}\le x\le 1\ .\\
\end{cases}
\end{equation}
For each choice of $s_1$, $s_2>1$, $p$, $q$, $r>0$, we consider only $a>0$ such that $0\leq W_a(x)\leq1$ for $x\in [0,1]$.

An example of a $W_a$ map is shown in Fig.\ref{fig:wequal1}. Fig.\ref{fig:wequal1}(a) is the unperturbed $W_0$ map with turning fixed point at 1/2 and $s_1=3/2$, $s_2=3$. Fig.\ref{fig:wequal1}(b) is the perturbed map $W_a$, with $a=0.05$, $r=2$, $p=3$, $q=2$. The slope of the second branch is $s_1+pa=1.65$, the slope of the third branch is $s_2+qa=3.1$, and $W_{0.05}(1/2)=1/2+ra=0.6$.
\begin{figure}[h] 
  \centering $
  \begin{array}{cc}
  \includegraphics[width=2.3in,height=2.3in,keepaspectratio]{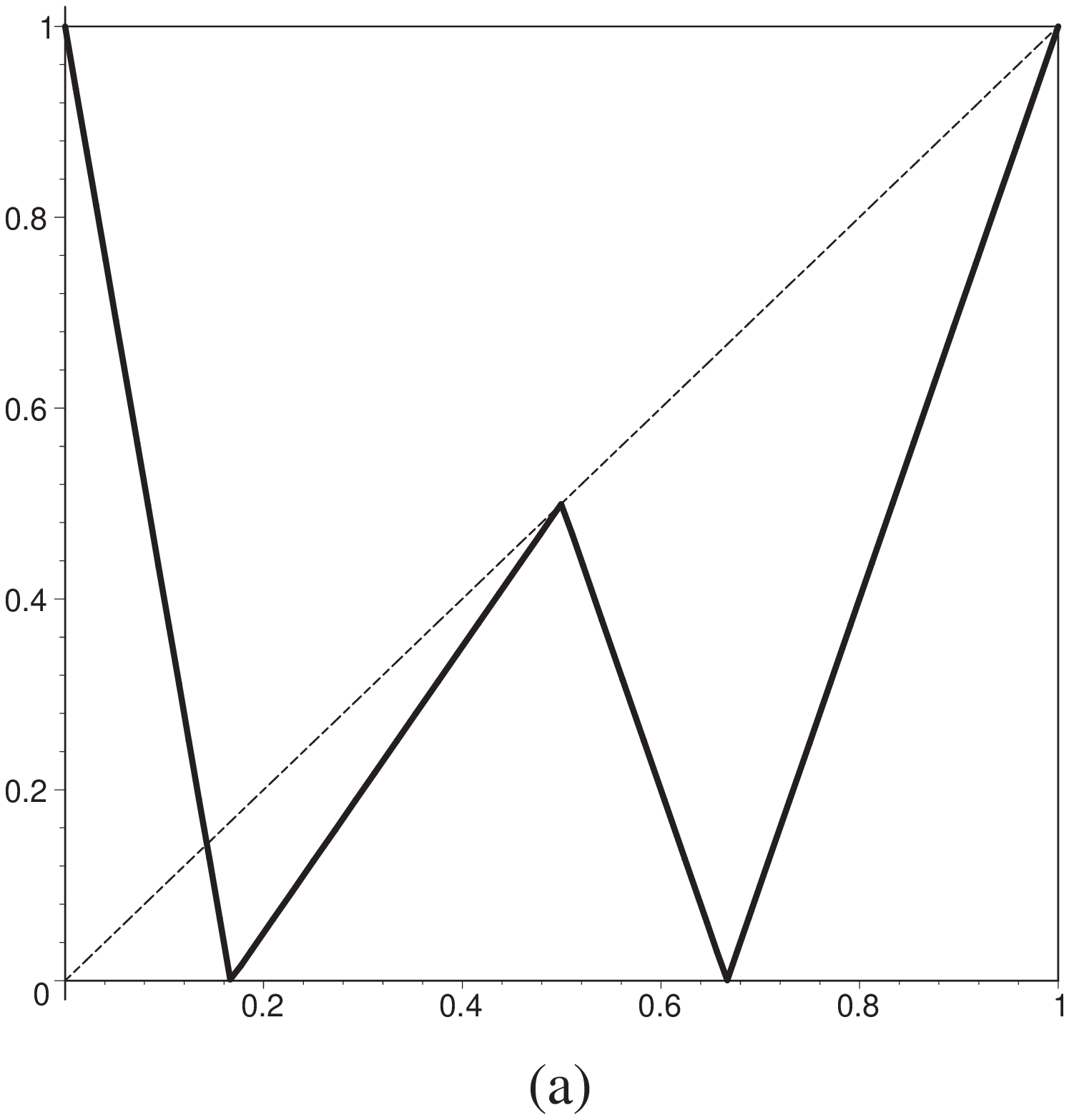} & \includegraphics[width=2.3in,height=2.3in,keepaspectratio]{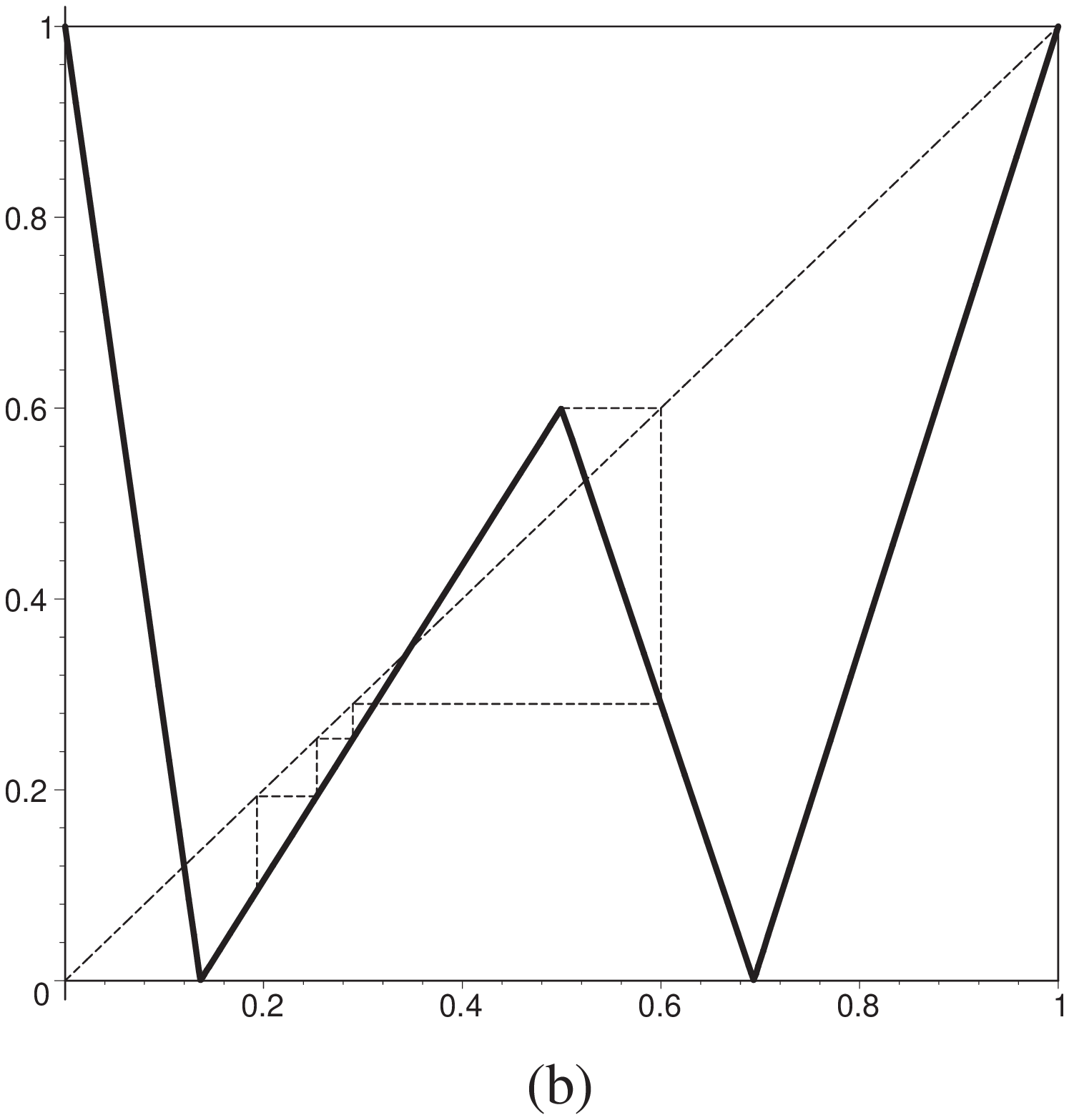}
  \end{array}$
  \caption{The $W$-like maps with $\frac{1}{s_1}+\frac{1}{s_2}=1$: (a) $W_0$ with $s_1=3/2$ and $s_2=3$, (b) $W_a$ with $s_1=3/2$, $s_2=3$; $a=0.05$; $r=2$, $p=3$, $q=2$; also several initial points of the trajectory of $1/2$.}
  \label{fig:wequal1}
\end{figure}

Every $W_a$ has a unique absolutely continuous invariant measure $\mu_a$ since all the slopes are greater than 1 in modulus. We will show later that, for $\frac{1}{s_1}+\frac{1}{s_2}\leq1$, $\mu_a$ is supported on $[0,1]$ and for $\frac{1}{s_1}+\frac{1}{s_2}>1$ it is supported on a subinterval around 1/2. $W_a$ is an exact map with the measure $\mu_a$.
Let $h_a$ denote the normalized density of $\mu_a$, $a\ge 0$. Since the $W_0$ map is a Markov one, it is easy to check that
\begin{equation}
h_0=\begin{cases} \frac{2s_1(s_2+1)}{2s_1s_2+s_1-s_2} \ ,\ \text{\ for\ } \ 0\le x<1/2\ ;\\
                      \frac{2s_2(s_1-1)}{2s_1s_2+s_1-s_2}\ ,\ \text{\ for\ } \ 1/2\le x\le 1\ .\\
\end{cases}
\end{equation}

Our main result is the following theorem
\begin{theorem}\label{main}
As $a\to 0$ the measures $\mu_a$ converge $*$-weakly to the
 measure\\
(I) $\delta_{(\frac 12)}$, if $\frac{1}{s_1}+\frac{1}{s_2}>1$;\\
(II) $\frac{(qs_1+ps_2-p-q)(s_2+2)}{(qs_1+ps_2-p-q)(s_2+2)+2rs_1s_2^2}\mu_0+\frac{2rs_1s_2^2}{(qs_1+ps_2-p-q)(s_2+2)+2rs_1s_2^2} \delta_{(\frac 12)}$, if $\frac{1}{s_1}+\frac{1}{s_2}=1$;\\
(III) $\mu_0$, if $\frac{1}{s_1}+\frac{1}{s_2}<1$,\\
where $\delta_{(\frac 12)}$ is the Dirac measure at point $1/2$.
\end{theorem}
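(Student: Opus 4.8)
The plan is to treat case~(I) by an elementary trapping argument and cases~(II)--(III) by a soft compactness argument that reduces the theorem to computing one constant, the mass escaping to $1/2$. Throughout, let $c_L=\tfrac{2s_1(s_2+1)}{2s_1s_2+s_1-s_2}$ and $c_R=\tfrac{2s_2(s_1-1)}{2s_1s_2+s_1-s_2}$ be the two constant values of the $W_0$-density $h_0$ displayed above. \emph{Case (I).} The hypothesis $\tfrac1{s_1}+\tfrac1{s_2}>1$ is equivalent to $(s_1-1)(s_2-1)<1$, so for all small $a>0$ one has $(s_1-1+pa)(s_2-1+qa)<1$. A short computation with the two middle branches then shows that $N_a:=\bigl[\tfrac12-\tfrac{ra}{s_1-1+pa},\ \tfrac12+ra\bigr]$ satisfies $W_a(N_a)=N_a$, with the branch-$2$ piece already mapping onto all of $N_a$. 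Hence $W_a|_{N_a}$ is a piecewise expanding map of $N_a$ onto itself; extending its acim by $0$ yields an acim of $W_a$ on $[0,1]$, which by uniqueness equals $\mu_a$, so $\operatorname{supp}\mu_a\subseteq N_a$. Since $\tfrac12\in N_a$ and $\operatorname{diam}N_a=ra\cdot\tfrac{s_1+pa}{s_1-1+pa}\to0$, we conclude $\mu_a\to\delta_{(\frac12)}$ $*$-weakly.

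In cases~(II) and (III), $(s_1-1)(s_2-1)\ge1$, which for small $a>0$ forces $(s_1-1+pa)(s_2-1+qa)>1$ (strictly, as $p,q>0$); hence no trapping interval exists and $\mu_a$ is supported on $[0,1]$. By tightness, along any sequence $a\downarrow0$ some subsequence satisfies $\mu_a\to\nu$ $*$-weakly for a probability $\nu$; since $W_a\to W_0$ uniformly on $[0,1]$, passing to the limit in $\int\varphi\circ W_a\,d\mu_a=\int\varphi\,d\mu_a$ shows $\nu$ is $W_0$-invariant. A uniform $L^\infty$ bound on the density of $\mu_a$ restricted to $[0,1]\setminus(\tfrac12-\eps,\tfrac12+\eps)$ --- obtained from a uniform Lasota--Yorke inequality for the first-return map of $W_a$ to that set, which for small $\eps$ removes the one region where the bounded-variation method fails --- then shows that the only possible singular part of $\nu$ is carried by $\{\tfrac12\}$. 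Writing $\nu=\nu_{ac}+\beta\,\delta_{(\frac12)}$, the a.c.\ part $\nu_{ac}$ is again $W_0$-invariant, so $\nu_{ac}=c\,\mu_0$ with $c=1-\beta$ by uniqueness of $\mu_0$. It remains to evaluate $c$ and to check it is independent of the subsequence.

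In case~(III), $\tfrac1{s_1}+\tfrac1{s_2}<1$ and the uniform bound extends to all of $[0,1]$ (this is Theorem~\ref{way2}), so $\nu$ is a.c., $\nu=\mu_0$, $c=1$. Case~(II) is the crux, and I would run a flux balance across $\partial N_a$ for the \emph{would-be} trapping interval $N_a=\bigl[\tfrac12-u_a,\tfrac12+ra\bigr]$, $u_a=\tfrac{ra}{s_1-1+pa}$ (now $W_a$ overshoots $N_a$ through a small hole of length $\ell_a=ra-\tfrac{ra+u_a}{s_2+qa}$ near $\tfrac12+ra$). Three points are needed. (i) Rescaling $N_a$ onto $[0,1]$ conjugates $W_a|_{N_a}$ to a map converging to a Markov tent-like map (turning point $1/s_1$, slopes $s_1$, $s_2$, both branches onto $[0,1]$) whose acim is Lebesgue \emph{precisely because} $\tfrac1{s_1}+\tfrac1{s_2}=1$; hence $\mu_a$ on $N_a$ is asymptotically uniform, with density near the top $\sim\mu_a(N_a)/\operatorname{diam}N_a$. (ii) The outward flux through the hole equals that density times $\ell_a$; from $(s_1-1+pa)(s_2-1+qa)-1=a(qs_1+ps_2-p-q)+O(a^2)$ one gets $\ell_a=O(a^2)$, so this flux has order $a$. (iii) Mass re-enters $N_a$ only through the two outer branches, so the inward flux is $c\,|N_a|\bigl(\tfrac{s_1-1}{2s_1}c_L+\tfrac{s_2-1}{2s_2}c_R\bigr)=c\,c_R\,|N_a|$, using $\tfrac{s_1-1}{2s_1}c_L+\tfrac{s_2-1}{2s_2}c_R=c_R$ (which is just $W_0$-invariance of $h_0$ on $[1/2,1]$), again of order $a$. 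Equating the two fluxes, cancelling the common factor $a$, and simplifying with $s_1s_2=s_1+s_2$ (hence $c_R=\tfrac2{s_2+2}$ and $(s_1-1)^2/s_1^3=1/(s_1s_2^2)$) gives $\tfrac{c}{1-c}=\tfrac{(qs_1+ps_2-p-q)(s_2+2)}{2rs_1s_2^2}$, i.e.\ exactly the $c$ in the statement; being subsequence-independent, this identifies the full $*$-weak limit.

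The main obstacle is making case~(II) rigorous: one must genuinely control $h_a$ on the shrinking interval $N_a$ --- essentially prove stability of the acim of the rescaled limiting Markov map under the vanishing ``hole'' perturbation --- and show the error terms in both fluxes are $o(a)$ uniformly, so that their ratio converges. The uniform density bounds used above are the other delicate point, since the inner slopes are not bounded away from $2$ and the bounded-variation method of \cite{BG} does not apply to $W_a$ directly; an alternative to the flux computation is to write $h_a$ explicitly via the invariant-density formula for piecewise linear maps of \cite{G1} and pass to the limit term by term.
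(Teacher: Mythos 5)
Your case (I) is essentially the paper's argument: the paper exhibits the invariant interval $[x^\ast_l,x^\ast_r]$ around the turning fixed point and uses the fact that the acim of the resulting skewed tent map lives there; your $N_a=[x^\ast_l,W_a(1/2)]$ is the same trapping region and the conclusion $\mu_a\to\delta_{(\frac 12)}$ follows identically. For cases (II)--(III) your route is genuinely different. The paper uses no compactness or flux argument: it writes the invariant density \emph{explicitly} via the piecewise-linear formula of \cite{G1} (Lemmas \ref{Just} and \ref{density_fa}), squeezes $f_a$ between two explicit step functions $f_l\le f_a\le f_h$ built from the orbit of $1/2$, and computes $\lim_{a\to0}\tfrac1a\int_{J_i}f_h\,d\lambda$ over three regions; the constant in (II) drops out of those integrals. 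Your flux-balance heuristic is attractive and, as far as I can check, reproduces the paper's constant exactly (the identities $\tfrac{s_1-1}{2s_1}c_L+\tfrac{s_2-1}{2s_2}c_R=c_R$, $c_R=\tfrac{2}{s_2+2}$ and $\ell_a\sim\tfrac{r(qs_1+ps_2-p-q)}{(s_1-1)s_2}\,a^2$ are all correct under $s_1s_2=s_1+s_2$), and it explains \emph{why} the answer has the form it does more transparently than the paper's computation.

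However, as written the argument for (II) has two genuine gaps, both of which you partly acknowledge. First, the outward flux needs the value of $h_a$ \emph{at the hole}, a set of length $O(a^2)$ at the edge of the shrinking interval $N_a$, to be asymptotic to the average $\mu_a(N_a)/|N_a|$. That the rescaled limit map preserves Lebesgue does not give this: the actual rescaled system is an open system with an $O(a)$-relative hole plus an injection, and one needs a quantitative stability statement (uniform spectral gap for the rescaled transfer operators together with control of the boundary perturbation) to conclude its stationary density is uniformly close to $1$ at the hole. Nothing in the proposal supplies this; it is precisely the work the paper does by exhibiting $f_a$ in closed form. Second, the inward flux needs $\mu_a(J_a)/|J_a|\to c\,h_0(x_0)$ for the \emph{shrinking} preimage intervals $J_a$ of $N_a$ under the two outer branches; this local density statement does not follow from $*$-weak convergence of $\mu_a$, and the uniform Lasota--Yorke inequality you invoke for the first-return map to $[0,1]\setminus(\tfrac12-\eps,\tfrac12+\eps)$ is asserted rather than proved (uniformity in $a$ is delicate since the number of return branches is unbounded as $a\to0$). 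Until both estimates are established with errors $o(a)$, the exact balance $\mu_a(N_a\cap W_a^{-1}(N_a^c))=\mu_a(N_a^c\cap W_a^{-1}(N_a))$ cannot be evaluated and the constant $c$ is not determined. (For (III) you defer to Theorem \ref{way2}, which matches the paper's own logic, so I do not count that as a gap of Theorem \ref{main} itself.)
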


The proof relies on the general formula for invariant densities of piecewise
linear maps \cite{G1} and direct calculations. Most objects and quantities we use depend on the parameter $a$.
We suppress $a$ from the notation to make it simpler.

In case (III), we actually prove a little more:
\begin{theorem}\label{way2}
If $\frac{1}{s_1}+\frac{1}{s_2}<1$, then the normalized invariant densities $\{h_a\}$ are uniformly bounded for given $p$, $q$ and $r$. Consequently, we obtain Theorem \ref{main}(III).
\end{theorem}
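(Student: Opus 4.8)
The plan is to use the explicit formula of \cite{G1} for the invariant density of a piecewise linear map, together with an analysis of the forward orbit $x_n=W_a^n(1/2)$ of the turning fixed point. Of the breakpoints of $W_a$, the two ``kink'' points $\tfrac12-\tfrac{1/2+ra}{s_1+pa}$ and $\tfrac12+\tfrac{1/2+ra}{s_2+qa}$ are mapped by $W_a$ to $0$ and then to the fixed endpoint $1$, so they generate only finitely many nontrivial terms, whereas $1/2$ is the only breakpoint with an infinite orbit. Since $W_a$ is exact on $[0,1]$ when $\tfrac1{s_1}+\tfrac1{s_2}\le1$ (noted in Section~\ref{S:main}), \cite{G1} applies on $[0,1]$ and writes $h_a=\phi_a/K_a$, where $\phi_a=\psi_a+\sum_{n\ge1}c_n\mathbf 1_{I_n}$ is a step function on the orbit partition — $\psi_a$ bounded (uniformly in $a$) and built from the finite orbits of the kink points, each $I_n$ an interval having $x_n$ among its endpoints, and $|c_n|$ bounded by the product $\prod_{k=0}^{n-1}|W_a'(x_k)|^{-1}$ — and $K_a=\int_0^1\phi_a$. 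Since $\|h_a\|_\infty=\|\phi_a\|_\infty/K_a$, the task splits into an upper bound for $\|\phi_a\|_\infty$ and a lower bound for $K_a$, both uniform for $a$ near $0$ (for $a$ bounded away from $0$, $W_a$ is a single piecewise expanding map and $\|h_a\|_\infty<\infty$ classically).

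The engine is the orbit analysis, and it is here that $\tfrac1{s_1}+\tfrac1{s_2}<1$ enters, in the equivalent form $(s_1-1)(s_2-1)>1$. One computes $x_1=\tfrac12+ra$ (in the third branch) and $x_2=\tfrac12-ra(s_2+qa-1)$ (in the second branch), and, as long as $x_n$ lies in the second branch, $x_{n+1}-\tfrac12=(s_1+pa)(x_n-\tfrac12)+ra$. The inequality $(s_1-1)(s_2-1)>1$ forces, for $a$ small, the orbit to remain in the second branch while $|x_n-\tfrac12|$ grows geometrically with ratio about $s_1+pa$, so that after $N(a)$ steps — with $N(a)$ of order $\log(1/a)$, hence $N(a)\to\infty$ as $a\to0$ — it leaves into one of the two full outer branches, beyond which only the crude bound on $|c_n|$ will be used. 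Two facts result: the itinerary of the orbit relative to $1/2$ is ``once to the right, then always to the left'', so that for each fixed $n$ the coefficient $c_n$ has a well-defined limit $c_n^{(0)}$ as $a\to0$ (during the escape phase $|c_n|$ is of order $(s_2+qa)^{-2}(s_1+pa)^{-(n-2)}$); and $N(a)\to\infty$.

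Since the four slopes of $W_0$, namely $\tfrac{2s_1}{s_1-1},\ s_1,\ s_2,\ \tfrac{2s_2}{s_2-1}$, all exceed $1$, continuity furnishes $\lambda>1$ and $a^*>0$ with $|W_a'|\ge\lambda$ on $[0,1]$ for all $a\in[0,a^*]$; hence $|c_n|\le\lambda^{-n}$ and $\|\phi_a\|_\infty\le C_1$ uniformly. For the lower bound on $K_a$, fix $\delta>0$ smaller than $\tfrac{s_1-1}{2s_1}$, the limiting left endpoint of the second branch: throughout the escape phase every $x_n$ stays to the right of $\delta$, so $\mathbf 1_{I_n}$ is constant on $[0,\delta]$ for $n\le N(a)$, while for $n>N(a)$ one has $|c_n|\le\lambda^{-N(a)}$. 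Combining this with $N(a)\to\infty$ and $c_n\to c_n^{(0)}$ shows that, for $x\in[0,\delta]$, $\phi_a(x)$ converges as $a\to0$ to a number $v$ which a direct computation — using the explicit limiting coefficients $c_n^{(0)}$ of the escaping itinerary — shows to be \emph{positive}. Therefore $K_a\ge\int_0^\delta\phi_a\to v\delta>0$, so $K_a\ge v\delta/2$ for small $a$ and $\|h_a\|_\infty\le 2C_1/(v\delta)=:C$. (By contrast, when $(s_1-1)(s_2-1)\le1$ the orbit of $1/2$ stays within $O(ra)$ of $1/2$ — indeed $W_a$ then has a forward-invariant interval of length $O(ra)$ about $1/2$ — and the oscillating itinerary makes $\phi_a$ vanish off that interval, so $K_a=O(ra)\to0$ and $\|h_a\|_\infty\to\infty$; this is exactly why the strict inequality is needed.) Finally, Theorem~\ref{main}(III) follows in the usual way: the $\mu_a$ are absolutely continuous with densities bounded by $C$, hence weak-$*$ precompact with every limit point absolutely continuous; passing to the limit in $\int f\circ W_a\,d\mu_a=\int f\,d\mu_a$ (using $W_a\to W_0$ uniformly and dominated convergence) identifies every limit point with the unique acim $\mu_0$, so $\mu_a\to\mu_0$ weak-$*$.

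I expect the main obstacle to be the orbit bookkeeping together with the uniform control of the series. One must verify rigorously that in case (III) the orbit of $1/2$ stays in the second branch throughout the geometric-growth phase — this requires a quantitative threshold on $a$ and the exclusion of a premature jump into an outer branch — and pin down $N(a)$ of order $\log(1/a)$; and, most delicately, one must make the convergence of $\phi_a|_{[0,\delta]}$ to a positive limit precise even though the number of ``near-$1/2$'' terms of the series grows like $\log(1/a)$, so that no fixed truncation of the series suffices — one genuinely has to exploit the matched interplay between the geometric decay of the $c_n$ and the geometric escape of the $x_n$, and to handle carefully the one-sided derivative of $W_a$ at $1/2$, since it is the itinerary of the orbit relative to $1/2$ that pins down the coefficients.
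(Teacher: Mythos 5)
Your overall strategy---G\'ora's explicit formula, the orbit analysis of $1/2$ with escape time of order $\log(1/a)$, a uniform upper bound on the non-normalized density plus a lower bound on the normalizing integral, and then weak compactness to deduce Theorem \ref{main}(III)---is the same as the paper's. But there is a genuine gap: you have dropped the coefficient that G\'ora's theorem attaches to the orbit series. The invariant function is not $1+\sum_n|\beta(1/2,n)|^{-1}\chi^s(\cdot)$ but $1+D_1\sum_n(\cdots)+D_2\sum_n(\cdots)$, where $D=[D_1,D_2]$ solves the linear system $(-S^T+Id)D^T=V^T$ built from the return sums $S_{i,j}$ of the orbit of $1/2$; here $D_1=D_2=\Lambda=\bigl(1-\tfrac{s_1+s_2+pa+qa}{s_2+qa}S_{1,1}\bigr)^{-1}$ (Lemma \ref{Just} and the Remark after it). Your bound $|c_n|\le\prod_k|W_a'(x_k)|^{-1}$ silently assumes $|\Lambda|\le1$, which is false; controlling $\Lambda$ is the actual content of the proof, and its behaviour is governed by the sign of $\vartheta=1-\bigl(\tfrac{s_1+s_2}{s_1s_2}+\tfrac{s_1+s_2}{s_2^2(s_1-1)}\bigr)$, which for $\tfrac1{s_1}+\tfrac1{s_2}<1$ can be positive, negative, or zero.

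Concretely, three of your steps break without this. (1) When $\vartheta<0$ one has $\Lambda\to1/\vartheta<0$, and the coefficient of the indicator of $[0,W_a^{k_1}(1/2)]$ tends to $(1-\eta_0)/\vartheta<0$ with $\eta_0=\tfrac{s_1+s_2}{s_2^2(s_1-1)}<1$; so your limit $v$ is negative, not positive, and the inequality $K_a\ge\int_0^\delta\phi_a$ (which needs $\phi_a\ge0$ on $[\delta,1]$) is unjustified. What must be shown is that $|K_a|$ stays away from $0$ and that all contributions carry one sign; the paper does this by checking that every coefficient in the $\chi_j$-representation of $\widehat f_l,\widehat f_h$ is negative in this case and that $\widehat B\to\tfrac{1+\vartheta}{2\vartheta}\ne0$. (2) When $\vartheta=0$, $\Lambda\to\infty$, so $\|\phi_a\|_\infty$ is \emph{not} uniformly bounded and your fixed limits $c_n^{(0)}$ do not exist; the paper handles this by first dividing $f_a$ by $\Lambda$ and only then estimating the integral from below. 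Your argument as written covers only the sub-case $\vartheta>0$ (the paper's Case (ii), where indeed $f_a\ge1$ and everything is positive). (3) Your parenthetical about $(s_1-1)(s_2-1)\le1$ is inaccurate at the boundary: for $\tfrac1{s_1}+\tfrac1{s_2}=1$ there is no forward-invariant $O(ra)$-interval, the acim is still supported on all of $[0,1]$, and the orbit of $1/2$ does escape, only with escape time roughly twice as long; the degeneration there is quantitative ($K_a=O(a)$), not caused by a trapped orbit. Apart from these points, your orbit bookkeeping (Lemma \ref{L:estimates} in the paper) and the final compactness step are correct and match the paper.
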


\section{Proofs} \label{S:proofs}

This section contains the proofs of Theorems \ref{main} and \ref{way2}, divided
into a number of steps.

\subsection{Assume $\frac{1}{s_1}+\frac{1}{s_2}>1$}

Let \[x^\ast_l=\frac{s_1-1+pa-2ra}{2(s_1-1+pa)}\]
and \[x^\ast_r=\frac{s_2s_1-s_2+(2rs_1-q+ps_2+qs_1)a+(2rp+pq)a^2}{2(s_1-1+pa)(s_2+qa)}.\]  $x^\ast_l$ is the fixed point on the second branch of $W_a$,  and $x^\ast_r$ is the preimage of $x^\ast_l$ under the third branch of $W_a$. Both $x^\ast_r$ and  $x^\ast_l$ converge to $\frac 12$ as $a$ approaches 0. For small $a$, we have
$$W_a(1/2)-x^\ast_r=\frac{ra\left[s_1s_2-s_1-s_2+a(qs_1+ps_2-p-q+pqa)\right]}{(s_1-1+pa)(s_2+qa)}<0.$$
In this case, we have $W_a([x^\ast_l,x^\ast_r])\subseteq [x^\ast_l,x^\ast_r]$. $W_a|_{[x^\ast_l,x^\ast_r]}$ is a skewed tent map with $W_a(1/2)>1/2$; it is known that with acim $\mu_a$, it is exact on $[x^\ast_l,W_a(1/2)]$. Since $\mu_a$ is concentrated on $[x^\ast_l,x^\ast_r]$, we conclude that $\mu_a$ converge $*$-weakly to $\delta_{(\frac 12)}$. This proves Theorem \ref{main}(I).

Fig.\ref{fig:W-map_invar_box} shows an example with $a=0.05, r=2, p=3, q=2;$ $s_1=4/3, s_2=5/2$.
\begin{figure}[h] 
  \centering
  \includegraphics[width=2.8in,height=2.8in,keepaspectratio]{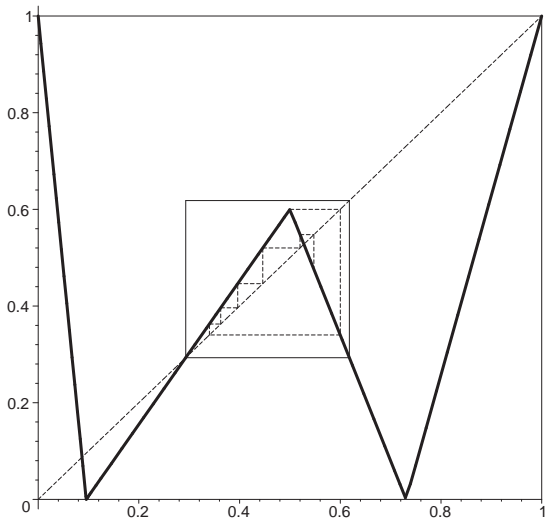}
  \caption{The $W_a$ map with $\frac{1}{s_1}+\frac{1}{s_2}>1$}
  \label{fig:W-map_invar_box}
\end{figure}

\subsection{Formula for the non-normalized invariant density of $W_a$ if $\frac{1}{s_1}+\frac{1}{s_2}\leq1$ }

An example of a map $W_a$ is shown in Fig.\ref{fig:wequal1}. We have the following proposition.
\begin{proposition}
For $\frac{1}{s_1}+\frac{1}{s_2}\leq 1$, the map $W_a$ has an absolutely continuous invariant measure $\mu_a$ supported on $[0,1]$ and the map $W_a$ with respect to $\mu_a$ is exact.
\end{proposition}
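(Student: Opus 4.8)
The plan is to build the acim of $W_a$ explicitly, exploit the piecewise-linear structure, and verify exactness via a standard argument about piecewise expanding maps whose invariant density is bounded below on its support. First I would observe that since $\frac{1}{s_1}+\frac{1}{s_2}\le 1$, the inner branches of $W_a$ are steep enough that the trajectory of the turning point $1/2$ eventually leaves any small neighborhood of $1/2$ and spreads over all of $[0,1]$; concretely, $W_a(1/2)=1/2+ra>1/2$, and the orbit of $1/2$ under the two inner branches moves away from $1/2$ (this is exactly the mechanism that fails in case (I), where $W_a([x^\ast_l,x^\ast_r])\subseteq[x^\ast_l,x^\ast_r]$). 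I would check that, for small $a$, the forward orbit of the critical value hits both outer branches, so that the whole interval $[0,1]$ is covered and there is no proper invariant subinterval around $1/2$. Hence any acim is supported on all of $[0,1]$.

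Next I would produce the invariant density. Because $W_a$ is piecewise linear with finitely many branches, I would apply the Li--Yorke / Boyarsky--Góra type formula from \cite{G1} (the same formula the paper announces it will use for Theorem \ref{main}): the non-normalized invariant density $h_a$ is a finite linear combination $\sum_i c_i\,\mathbf 1_{[0,\beta_i]}$ (or $\mathbf 1_{[\alpha_i,1]}$) where the breakpoints $\beta_i$ are the points of the forward orbit of the turning point $1/2$, and the coefficients are determined by a finite linear system coming from the Perron--Frobenius equation $P_{W_a}h_a=h_a$. I would argue that for $\frac{1}{s_1}+\frac{1}{s_2}\le1$ this system is finite-dimensional and non-degenerate (the relevant matrix is invertible because all slopes exceed $1$ in modulus), yielding a genuine $L^1$ density $h_a\ge 0$, not identically zero; normalizing gives $\mu_a$. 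Existence of \emph{some} acim also follows softly from Lasota--Yorke since $1/|W_a'|$ has bounded variation, but I want the explicit piecewise-constant form for the exactness step and for later sections anyway.

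For exactness I would use the standard route: $W_a$ is piecewise expanding and piecewise linear, $1/|W_a'|$ is of bounded variation, so by Kowalski \cite{Ko} (quoted in the introduction) the density $h_a$ has a positive lower bound on its support, which we just argued is all of $[0,1]$; thus $h_a$ is bounded away from $0$ on $[0,1]$. Then I would show $W_a$ is weakly mixing, hence exact: using the piecewise-linear Markov-like structure one checks that some iterate $W_a^n$ is "covering" in the sense that every branch image eventually spreads over $[0,1]$ (a Rychlik / Li--Yorke covering condition), which gives a unique acim and exactness of $(W_a,\mu_a)$; alternatively one cites the known fact that a piecewise expanding map with a single acim whose density is bounded below on the interval $[0,1]$ and which is topologically exact on $[0,1]$ is measure-theoretically exact. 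I would phrase the covering/no-invariant-subinterval claim carefully since that is really the crux.

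The main obstacle is precisely that covering step — showing there is no proper $W_a$-invariant subinterval and that some iterate of each branch covers $[0,1]$, uniformly for small $a>0$. Near $a=0$ the map $W_0$ is Markov and this is transparent, but for $a>0$ the breakpoints move and one must track the first few iterates of $1/2$ (as pictured in Fig.\ref{fig:wequal1}(b)) and use $\frac{1}{s_1}+\frac{1}{s_2}\le 1$ to guarantee the orbit escapes to the outer, strongly expanding branches; once it reaches an outer branch, expansion with factor $>2$ forces coverage of $[0,1]$ in finitely many further steps. Everything else — the bounded-variation bound, the explicit density formula, and the deduction of exactness from coverage plus a positive lower density — is routine and can be cited from \cite{BG}, \cite{Ko}, and \cite{G1}.
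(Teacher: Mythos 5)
Your overall architecture could be made to work, but the one step you yourself identify as the crux --- the covering property --- is exactly the step your proposal does not supply, and the mechanism you offer for it is the wrong tool. Tracking the forward orbit of the turning point and showing that $W_a^n(1/2)$ eventually reaches an outer branch only tells you that $W_a^n(J)$ \emph{meets} the outer branch domain when $J\ni 1/2$; it does not prevent the lengths $|W_a^n(J)|$ from shrinking along the way. The danger is precisely the fold at $1/2$: for $J=(z_1,z_2)\ni 1/2$ the two halves map onto overlapping intervals with common endpoint $W_a(1/2)$, so
\[
|W_a(J)|=\max\Bigl\{\bigl(\tfrac12-z_1\bigr)(s_1+pa),\;\bigl(z_2-\tfrac12\bigr)(s_2+qa)\Bigr\},
\]
which can be \emph{smaller} than $|J|$ even though every branch has slope of modulus greater than $1$ (try $s_1$ close to $1$). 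The paper's proof consists essentially of the inequality you are missing: the minimum of this maximum over all positions of $J$ of length $\ell$ equals $\ell\big/\bigl(\frac{1}{s_1+pa}+\frac{1}{s_2+qa}\bigr)$, which exceeds $\ell$ exactly when $\frac{1}{s_1+pa}+\frac{1}{s_2+qa}<1$ --- this is where the hypothesis $\frac{1}{s_1}+\frac{1}{s_2}\le 1$ actually enters, not through the dynamics of the critical orbit. (The boundary case $\frac{1}{s_1}+\frac{1}{s_2}=1$ with $a=0$ needs a separate short argument, since the growth factor degenerates to $1$: there the image of a straddling interval is one-sided at the fixed point $1/2$ and then grows under the second branch alone. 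Your "Markov, hence transparent" remark does cover this, but only that case.) Once every interval's iterates grow until they contain two partition points, two further iterates cover $[0,1]$.

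Beyond that gap, your route is much heavier than necessary: with the covering property in hand, full support of $\mu_a$ and exactness follow directly from the general theory of piecewise expanding maps in \cite{BG}, which is all the paper invokes. The explicit invariant-density formula of \cite{G1} and the Kowalski/Keller lower bound of \cite{Ko} are not needed for this proposition (the paper uses \cite{G1} only later, for the quantitative limits in Theorem \ref{main}), and the detour through "unique acim with density bounded below plus topological exactness implies measure-theoretic exactness" buys nothing here while importing a lower-bound statement whose failure-in-families is the very point of Section \ref{S:example}.
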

\begin{proof}
$W_a$ is a piecewise expanding transformation. From the general theory (see for example \cite{BG}), it follows that it is enough to show that the images $W_a^n(J)$ grow to cover all $[0,1]$ as $n\rightarrow \infty$, for any interval $J\subset[0,1]$. Since $W_a$ is expanding,  $W_a^n(J)$ grow until some image $W_a^{n_0}(J)$ contains an internal partition point. If this point is not 1/2, then $W_a^{n_0+2}(J)$ contains the repelling fixed point 1. Then its images grow to cover all of $[0,1]$. If this point is 1/2, we proceed as follows. First, assume that $\frac{1}{s_1}+\frac{1}{s_2}< 1$. Consider a small neighborhood $J=(z_1,z_2)$ around 1/2 with length $\ell$, then
$$\min\limits_{z_2-z_1=\ell}\max\left\{(\frac 12-z_1)(s_1+pa),(z_2-\frac 12)(s_2+qa)\right\}=\frac{1}{\frac{1}{s_1+pa}+\frac{1}{s_2+qa}}\ell>\ell.$$
Thus, the interval $J$ will grow until its image covers two partition points of $W_a$. Then the second  iteration afterward will cover $[0,1]$. Therefore, $W_a$ is exact with respect ot $\mu_a$.

Assume $\frac{1}{s_1}+\frac{1}{s_2}= 1$. If $a\neq 0$, then $\frac{1}{\frac{1}{s_1+pa}+\frac{1}{s_2+qa}}>1$, which implies $W_a$ is exact with respect to $\mu_a$.
In the case $a=0$, we first note that 1/2 is a turning fixed  point. Take again a small interval $J=(z_1,z_2)\ni 1/2$. Its image is an interval $(z,1/2)$. It will grow under iteration and its iterations still contain 1/2. It will grow until its image covers another partition point of $W_a$. Then, the second  iteration afterward will covers all of $[0,1]$. Thus, $W_a$ is again exact with respect to $\mu_a$.
\end{proof}
We adapt the general formulas of \cite{G1} to our case and obtain the following lemma:
\begin{lemma}\label{Just}
$(I)$ N=4, K=2, L=0\ ;\\
$(II)$ $\alpha=\left(1,1/2+ra,1/2+ra,1\right)$, $\beta=\left(\beta_1,\beta_2,\beta_3,\beta_4\right)$, where $\beta_1=-\frac{2(s_1+pa)}{s_1-1+pa-2ra}$, $\beta_2=s_1+pa$, $\beta_3=-(s_2+qa)$ and $\beta_4=\frac{2(s_2+qa)}{s_2-1+qa-2ra}$, $\gamma=\left(0,0,0,0\right)$\ ;\\
$(III)$ The digits $A=\left(a_1,a_2,a_3,a_4\right)$, where $a_1=-1, a_2=\frac{s_1-1+pa-2ra}{2}$, $a_3=-\frac{s_2+1+qa+2ra}{2}$, $a_4=\frac{s_2+1+qa+2ra}{s_1-1+pa-2ra}$\ ;\\
$(IV)$ There are two $c_i$'s, which are $c_1=(1/2,2)$ and $c_2=(1/2,3)$, and $j(c_1)=2$, $j(c_2)=3$. Then, $W_u=\{c_1,c_2\}$,$W_l=\emptyset$, $U_l=\{c_2\}$,$U_r=\{c_1\}$\ ;\\
$(V)$ $\beta(c_1,1)=s_1+pa$ since  $j(c_1)=2$, then $\beta(c_1,2)=-(s_1+pa)(s_2+qa)$ and $\beta(c_1,k)=-(s_2+qa)(s_1+pa)^{k-1}$ up to some $k$ which is the first moment $j$ when the $W_a^j(1/2)$ is less than $\frac 12 -\frac{1/2+ra}{s_1+pa}$, and is the same one defined in Lemma  \ref{L:estimates}\ ;\\
$(VI)$ $\beta(c_2,1)=-(s_2+qa)$ since  $j(c_2)=3$, then $\beta(c_2,2)=(s_2+qa)^2$ and $\beta(c_2,k)=(s_2+qa)^2(s_1+pa)^{k-2}$ up to the same $k$ in part $(e)$, $W_a^n(c_1)=W_a^n(c_2)$ for all $n$\ ;\\
$(VII)$ Based on $(VI)$, we have the following for the matrix $S=\left(S_{i,j}\right)_{1\leq i,j\leq 2}$\ :\\
For $c_1\in U_r$
$$S_{1,1}=\sum_{n=1}^\infty \frac{\delta(\beta((c_1,n)>0))\delta(W_a^n(c_1)>1/2)+\delta(\beta((c_1,n)<0))\delta(W_a^n(c_1)<1/2)}{|\beta(c_1,n)|},$$
$$S_{1,2}=\sum_{n=1}^\infty \frac{\delta(\beta((c_1,n)>0))\delta(W_a^n(c_1)>1/2)+\delta(\beta((c_1,n)<0))\delta(W_a^n(c_1)<1/2)}{|\beta(c_1,n)|}.$$

For $c_2\in U_l$
$$S_{2,1}=\sum_{n=1}^\infty \frac{\delta(\beta((c_2,n)<0))\delta(W_a^n(c_2)>1/2)+\delta(\beta((c_2,n)>0))\delta(W_a^n(c_2)<1/2)}{|\beta(c_2,n)|},$$
$$S_{2,2}=\sum_{n=1}^\infty \frac{\delta(\beta((c_2,n)<0))\delta(W_a^n(c_2)>1/2)+\delta(\beta((c_2,n)>0))\delta(W_a^n(c_2)<1/2)}{|\beta(c_2,n)|}.$$
\end{lemma}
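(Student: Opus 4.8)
The plan is to prove Lemma~\ref{Just} by rewriting each $W_a$ in the normal form used in \cite{G1} and then checking the seven items one by one; all of it is direct computation, the only genuine work being the combinatorics of the forward orbit of the turning point $1/2$. Recall that \cite{G1} encodes a piecewise linear map of $[0,1]$ by the number of branches $N$, the slope vector $\beta=(\beta_1,\dots,\beta_N)$, the height vector $\alpha$ and the translation vector $\gamma$ describing the affine pieces, a digit vector $A=(a_1,\dots,a_N)$, and a finite list of critical data $c_i=(x,j)$, each recording a turning point $x$ together with the index $j$ of the branch from which it is approached; the integers $K$ and $L$ count critical points and ``holes'', the sets $W_u$, $W_l$ sort the critical points according to whether their image lies above or below the turning value, and $U_l$, $U_r$ according to whether $x$ is a left or a right endpoint of branch $j$. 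The invariant density is then assembled from the matrix $S$, whose entries are sums of $1/|\beta(c_i,n)|$ along the forward orbit of $c_i$, signed according to the sign of the slope product $\beta(c_i,n)$ and the side of $1/2$ on which $W_a^n(c_i)$ lands.

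With this dictionary, items (I)--(III) follow by reading off the four cases defining $W_a$: there are four affine branches, so $N=4$; $W_a$ maps $[0,1]$ onto $[0,1]$ with no holes, so $L=0$; the $\beta_i$ are the linear coefficients in the formula, the heights are the outer-endpoint values $\alpha=(W_a(0),W_a(1/2^-),W_a(1/2^+),W_a(1))=(1,1/2+ra,1/2+ra,1)$, the $\gamma_i$ all vanish, and the digits $a_i$ are the elementary combinations prescribed in \cite{G1}. The turning points of $W_a$ other than $1/2$ are the local minima at the two preimages of $0$; these are mapped to $0$ and then to the repelling fixed endpoint $1$ and contribute nothing to $S$, so the relevant critical data reduces to the turning fixed point $1/2$, which is the right endpoint of branch $2$ and the left endpoint of branch $3$: this gives exactly the two symbols $c_1=(1/2,2)$, $c_2=(1/2,3)$ with $j(c_1)=2$, $j(c_2)=3$, hence $K=2$ and $c_1\in U_r$, $c_2\in U_l$.

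For item (IV) and the orbit data in (V)--(VI), I would first use the continuity of $W_a$ at $1/2$: the one-sided branch values agree, $W_a(c_1)=W_a(c_2)=1/2+ra$, so $W_a^n(c_1)=W_a^n(c_2)$ for every $n\ge 1$; since $1/2+ra>1/2$, both critical points lie in $W_u$ and $W_l=\emptyset$, giving $W_u=\{c_1,c_2\}$, $U_l=\{c_2\}$, $U_r=\{c_1\}$. Next I would trace the orbit of $1/2$ for small $a$: it goes $1/2\mapsto 1/2+ra$ (on branch $3$) $\mapsto 1/2-ra(s_2-1+qa)$ (on branch $2$), and from then on stays on branch $2$ with distance to $1/2$ growing geometrically — this is where the hypothesis $\tfrac1{s_1}+\tfrac1{s_2}\le 1$, equivalently $(s_1-1)(s_2-1)\ge 1$, enters — until at step $k$ it first drops below $\tfrac12-\tfrac{1/2+ra}{s_1+pa}$ onto branch $1$; this $k$ is the integer of Lemma~\ref{L:estimates}. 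Multiplying the branch slopes along these orbits gives $\beta(c_1,1)=s_1+pa$, $\beta(c_1,n)=-(s_2+qa)(s_1+pa)^{n-1}$ and $\beta(c_2,1)=-(s_2+qa)$, $\beta(c_2,n)=(s_2+qa)^2(s_1+pa)^{n-2}$ for $2\le n\le k$, which is exactly (V) and (VI).

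Finally, item (VII) is obtained by substituting $U_r=\{c_1\}$, $U_l=\{c_2\}$ and the orbit/slope-product data above into the general formula of \cite{G1} for the entries of the matrix $S$; the interchange of the two $\delta$-conditions between the $c_1$-row and the $c_2$-row is precisely the $U_r$-versus-$U_l$ dichotomy there. I expect the only real obstacle to be bookkeeping: fixing the orientation and endpoint conventions of \cite{G1} so that the signs of the $\beta_i$, the digits $a_i$, and the placement of $c_1,c_2$ into $U_r,U_l$ come out mutually consistent, and invoking Lemma~\ref{L:estimates} so that the orbit of $1/2$ really does have the simple ``branch~$3$, then branch~$2$ up to step~$k$'' shape used to compute the $\beta(c_i,n)$.
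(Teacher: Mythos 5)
Your proposal is correct and follows essentially the same route as the paper, which states Lemma~\ref{Just} without a formal proof as a direct read-off of the normal-form data of \cite{G1} from the four affine branches of $W_a$ and the orbit of the turning fixed point $1/2$. Your verification of the branch data, the reduction to the single critical point $1/2$ (the local minima map to $0$ and then to the fixed endpoint $1$), the identity $W_a^n(c_1)=W_a^n(c_2)$ by continuity at $1/2$, and the cumulative slopes up to the exit time $k$ supplies exactly the bookkeeping the paper leaves implicit.
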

\begin{remark}
It follows from $(V,VI)$ of Lemma 1 that $$S_{1,1}=S_{1,2}\ ,\ S_{2,1}=S_{2,2}\ \text{and }S_{1,1}=\frac{s_2+qa}{s_1+pa}S_{2,2}\ .$$
Let $Id$ be the $2\times2$ identity matrix and let $V=[1,1]$.  Then, for the solution, $D=[D_1,D_2]$, of the system :
$$\left(-S^T+Id\right)D^T=V^T,\eqno(1)$$
we have $D_1=D_2$. Let us denote them by $\Lambda$.
\end{remark}
Let ${I_1,I_2,I_3,I_4}$ be the partition of $I=[0,1]$ into maximal intervals of monotonicity of $W_a$: $I_1=[0,\frac{s_1-1+pa-2ra}{2(s_1+pa)}),I_2=(\frac{s_1-1+pa-2ra}{2(s_1+pa)},1/2),I_3=(1/2,\frac{s_2+1+qa+2ra}{2(s_2+qa)})$ and $I_4=(\frac{s_2+1+qa+2ra}{2(s_2+qa)},1]$. We define the following index function:
$$j(x)= j\text{ for } x\in I_j, j=1,2,3,4,$$
and
$$j(c_1)=2,j(c_2)=3.$$

We  define the cumulative slopes for iterates of points as follows:
$$\beta(x,1)=\beta_{j(x)}, \ \text{ and }\beta(x,n)=\beta(x,n-1)\cdot\beta_{j(W_a^{n-1}(x))}, \ \ \ \ n\geq2.$$

In particular, we have
$$\beta(1/2,n)= (s_1+pa)\cdot W_a'(W_a(1/2)) \cdot W_a'(W_a^2(1/2))\cdots  W_a'(W_a^{n-1}(1/2)) \ ,$$
which is the cumulative slope along the $n$ steps of trajectory of $1/2$. Recall that $k$ is the first moment $j$ when the $W_a^j(1/2)$ is less than $\frac 12 -\frac{1/2+ra}{s_1+pa}$. Let $k_1=[\frac{2}{3}k]$ (the integer part of $2k/3$). Note that $k_1\rightarrow\infty$ as $a\rightarrow 0$. Let
\begin{equation*} \chi^s(t,x)=\begin{cases} \chi_{[0,x]} \ \ & \ \text{for}\ \ t>0\ ;\\
                                            \chi_{[x,1]} \ \ & \ \text{for}\ \ t<0\ .
\end{cases}
\end{equation*}
Now, we can obtain the following formula for $f_a$:
\begin{lemma}  \label{density_fa}
Let
\begin{eqnarray*}
 f_a= 1+ (1+\frac{s_1+pa}{s_2+qa})\Lambda \left(\sum_{n=1}^\infty \frac{\chi^s(\beta(1/2,n),W_a^n(1/2))}{|\beta(1/2,n)|}
 \right).
\end{eqnarray*}
Then $f_a$ is $W_a$ invariant non-normalized density.
Furthermore, for small $a>0$, we have:\\
(I) If $\frac{1}{s_1}+\frac{1}{s_2}=1$, then $\Lambda<-1$\ ;\\
(II) If $\frac{1}{s_1}+\frac{1}{s_2}<1$, the sign of $\Lambda$ depends on $s_1$ and $s_2$, can be either positive or negative depending on the sign of  $\vartheta=1-\left(\frac{s_1+s_2}{s_1s_2}+\frac{s_1+s_2}{s_2^2(s_1-1)}\right)=
1-\frac{s_1+s_2}{s_1s_2}\left(1+\frac{s_1}{s_2(s_1-1)}\right)$. The case when $\vartheta=0$ is discussed at the end of Section \ref{S:proofs}.
\end{lemma}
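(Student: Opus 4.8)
The plan is to treat two things in turn: the closed form for $f_a$, which is essentially an assembly job, and the sign of the coefficient $\Lambda$, which requires an asymptotic analysis of the orbit of $1/2$.

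\medskip
\noindent\emph{The formula for $f_a$.} First I would substitute the data collected in Lemma \ref{Just} into the general formula of \cite{G1}, which expresses a non-normalized invariant density of a piecewise linear map as $1+\sum_i D_i\,\phi_{c_i}$, the sum over the one-sided turning points, with $D=(D_1,D_2)$ solving the linear system $(1)$ and $\phi_{c_i}(\cdot)=\sum_{n\ge1}|\beta(c_i,n)|^{-1}\chi^s(\beta(c_i,n),W_a^n(c_i))$. By the Remark $D_1=D_2=\Lambda$. Since $W_a(c_1)=W_a(c_2)=W_a(1/2)$, the forward orbits of $c_1$ and $c_2$ coincide with that of $1/2$ for all $n\ge1$; by parts (V) and (VI) of Lemma \ref{Just}, $\beta(c_1,n)=\beta(1/2,n)$ and $\beta(c_2,n)=-\tfrac{s_2+qa}{s_1+pa}\beta(1/2,n)$, and, matching the left/right orientation conventions of \cite{G1} for $c_1\in U_r$ and $c_2\in U_l$, one obtains $\phi_{c_2}=\tfrac{s_1+pa}{s_2+qa}\phi_{c_1}$ with $\phi_{c_1}(\cdot)=\sum_{n\ge1}|\beta(1/2,n)|^{-1}\chi^s(\beta(1/2,n),W_a^n(1/2))$. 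Collecting terms gives the stated $f_a$, which by \cite{G1} is the (non-normalized, hence non-negative) invariant density. Convergence of the series is automatic: each factor of $\beta(1/2,n)$ has modulus at least $\min\{s_1+pa,\,s_2+qa,\,|\beta_1|,\,|\beta_4|\}>1$, so $|\beta(1/2,n)|$ grows geometrically.

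\medskip
\noindent\emph{Reducing the sign of $\Lambda$ to $S_{1,1}$.} Solving $(1)$ under the symmetries $S_{1,1}=S_{1,2}$, $S_{2,1}=S_{2,2}$, $S_{1,1}=\tfrac{s_2+qa}{s_1+pa}S_{2,2}$ gives $\Lambda=\bigl(1-S_{1,1}-S_{2,2}\bigr)^{-1}=\bigl(1-(1+\tfrac{s_1+pa}{s_2+qa})S_{1,1}\bigr)^{-1}$, so everything comes down to the behaviour of $S_{1,1}$ as $a\to0$. I would follow the orbit of $1/2$: it moves to $1/2+ra$ on the third branch, then to the left of $1/2$, and then descends the repelling fixed point $x^\ast_l$ along the second branch for $k-2$ further steps until it drops below $1/2-\tfrac{1/2+ra}{s_1+pa}$. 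Along this stretch $\beta(1/2,n)=-(s_2+qa)(s_1+pa)^{n-1}$ and every $\delta$-factor in $S_{1,1}$ equals $1$, so
\[
S_{1,1}=\frac1{s_1+pa}+\frac1{s_2+qa}\cdot\frac{1-(s_1+pa)^{-(k-1)}}{s_1-1+pa}+\tau_a,\qquad |\tau_a|\le C\,(s_1+pa)^{-(k-1)},
\]
where $\tau_a$ gathers the post-escape terms. Since $k\to\infty$ as $a\to0$ (Lemma \ref{L:estimates}), both $\tau_a$ and the truncation term $(s_1+pa)^{-(k-1)}$ vanish, hence
\[
\bigl(1+\tfrac{s_1+pa}{s_2+qa}\bigr)S_{1,1}\;\longrightarrow\;\frac{s_1+s_2}{s_1s_2}+\frac{s_1+s_2}{s_2^2(s_1-1)}=1-\vartheta .
\]

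\medskip
\noindent\emph{Conclusion.} For case (II), when $\vartheta\ne0$ this yields $\Lambda\to\vartheta^{-1}$, so $\operatorname{sgn}\Lambda=\operatorname{sgn}\vartheta$ for all sufficiently small $a>0$; evaluating $\vartheta$ at, e.g., $s_1=s_2=4$ (positive) and $s_1=s_2=21/10$ (negative) shows both signs occur, and the borderline $\vartheta=0$ is deferred. Case (I) is the delicate one. When $\tfrac1{s_1}+\tfrac1{s_2}=1$ a direct computation gives $\vartheta\equiv-1$, so $\Lambda\to-1$ and the claim $\Lambda<-1$ is equivalent to $\bigl(1+\tfrac{s_1+pa}{s_2+qa}\bigr)S_{1,1}<2$ for small $a>0$. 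The plan is to expand in $a$: the geometric main part $\bigl(1+\tfrac{s_1+pa}{s_2+qa}\bigr)\bigl[\tfrac1{s_1+pa}+\tfrac1{(s_2+qa)(s_1-1+pa)}\bigr]$ equals $2$ at $a=0$, and, using $s_1+s_2=s_1s_2$ (equivalently $s_1-1=1/(s_2-1)$), its derivative at $a=0$ works out to $-\bigl(p(s_2-1)(s_2+2)+q(3s_1-2)\bigr)/(s_1s_2)<0$; meanwhile, because the obstructing term $ra\bigl[(s_2-1)-\tfrac1{s_1-1}\bigr]$ now vanishes, the orbit of $1/2$ starts within distance $\Theta(a^2)$ of $x^\ast_l$, hence stays on the second branch for $k=\Theta(\log\tfrac1a)$ steps and $(s_1+pa)^{-(k-1)}=\Theta(a^2)$, so both $\tau_a$ and the truncation term are $O(a^2)$ — negligible against the negative linear main term. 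Carrying out this comparison — in particular establishing the rate $(s_1+pa)^{-(k-1)}=\Theta(a^2)$, which quantifies how long the orbit of $1/2$ lingers near $x^\ast_l$ — is the main obstacle; the formula and case (II) are routine by comparison.
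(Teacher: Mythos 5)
Your proposal is correct, and its skeleton coincides with the paper's: the closed form for $f_a$ is assembled from Lemma \ref{Just} and the formula of \cite{G1} exactly as in the paper, the system $(1)$ is reduced to $\Lambda=\bigl(1-(1+\tfrac{s_1+pa}{s_2+qa})S_{1,1}\bigr)^{-1}$, and $S_{1,1}$ is evaluated by following the orbit of $1/2$ down the second branch until the escape time $k$; case (II) is then handled identically (via $\Lambda\to\vartheta^{-1}$). Where you genuinely diverge is the endgame of case (I). The paper bounds $S_{1,1}$ from above by extending the geometric series to infinity, so that $(1+\tfrac{s_1+pa}{s_2+qa})S_{1,1}\le\kappa+\eta$ with $\kappa=\tfrac{s_1+s_2+pa+qa}{(s_1+pa)(s_2+qa)}$, $\eta=\tfrac{s_1+s_2+pa+qa}{(s_2+qa)^2(s_1+pa-1)}$, and then simply observes that each of $\kappa$, $\eta$ is strictly below $1$ for small $a>0$, giving $\Lambda_l\le\Lambda\le\Lambda_h<-1$ with no quantitative control of $k$ (the price being the side remark that dominating the post-escape tail by the geometric series requires $s_1\le s_2$, with a modification otherwise). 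You instead expand $\kappa+\eta=2-ca+O(a^2)$ with the explicit $c=\bigl(p(s_2-1)(s_2+2)+q(3s_1-2)\bigr)/(s_1s_2)>0$ (which I checked against the relation $s_1+s_2=s_1s_2$; it is right) and beat this $\Theta(a)$ margin against $O(a^2)$ truncation and post-escape errors, controlled by the rate $(s_1+pa)^{-(k-1)}=\Theta(a^2)$ --- exactly what the second inequality in the proof of Lemma \ref{L:estimates1} supplies. Your route costs more computation but avoids the $s_1$ versus $s_2$ case split and yields a rate for $\Lambda+1$; the paper's is softer and shorter. One small imprecision to fix: $\Lambda<-1$ is equivalent to $1<(1+\tfrac{s_1+pa}{s_2+qa})S_{1,1}<2$, not merely to the upper inequality; the lower one is free because the quantity tends to $2$, but it should be stated.
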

\begin{proof}
By the Theorem 2 in \cite{G1}, it follows from $(IV, V, VI)$ of Lemma \ref{Just} that:
\begin{eqnarray*}
 f_a&=&1+D_1 \sum_{n=1}^\infty \frac{\chi^s(\beta(c_1,n),W_a^n(c_1))}{|\beta(c_1,n)|}+D_2 \sum_{n=1}^\infty \frac{\chi^s(-\beta(c_2,n),W_a^n(c_2))}{|\beta(c_2,n)|} \\
 & =& 1+\Lambda \sum_{n=1}^\infty \frac{\chi^s(\beta(c_1,n),W_a^n(1/2))}{|\beta(c_1,n)|}+\Lambda \sum_{n=1}^\infty \frac{\chi^s(-\beta(c_2,n),W_a^n(1/2))}{|\beta(c_2,n)|}\\
 &=& 1+ (1+\frac{s_1+pa}{s_2+qa})\Lambda \left(\sum_{n=1}^\infty \frac{\chi^s(\beta(1/2,n),W_a^n(1/2))}{|\beta(1/2,n)|}
 \right).
\end{eqnarray*}
Since
\begin{eqnarray*}   S_{1,1}&\geq&\frac{1}{s_1+pa}+\frac{1}{s_2+qa}\sum\limits_{n=1}^{k_1-1}\frac{1}{(s_1+pa)^n}=\frac{1}{s_1+pa}+\frac{1}{s_2+qa}\frac{1-\frac{1}{(s_1+pa)^{k_1-1}}}{s_1+pa-1}\ ,\\
S_{1,1}&\leq&\frac{1}{s_1+pa}+\frac{1}{s_2+qa}\sum\limits_{n=1}^{\infty}\frac{1}{(s_1+pa)^n}=\frac{1}{s_1+pa}+\frac{1}{s_2+qa}\frac{1}{s_1+pa-1}\ , \end{eqnarray*}
and $\Lambda=\frac{1}{1-\frac{s_1+s_2+pa+qa}{s_2+qa}S_{1,1}}$, we have
\begin{equation}\label{estimate_A}
\Lambda_l=\frac{1}{1-(\kappa+\eta(1-\frac{1}{(s_1+pa)^{k_1-1}}))}\leq \Lambda\leq \frac{1}{1-(\kappa+\eta)}=\Lambda_h\ ,
\end{equation}
where $\kappa=\frac{s_1+s_2+pa+qa}{(s_1+pa)(s_2+qa)}$ , $\eta=\frac{s_1+s_2+pa+qa}{(s_2+qa)^2(s_1+pa-1)}$.

To obtain the upper bound of $S_{1,1}$, we assume $s_1<s_2$. For $s_1>s_2$ the calculations differ slightly.

(I) Note that for small $a$ both estimates $\Lambda_l$ and $\Lambda_h$ are smaller than $-1$ since both $\kappa$ and $\eta$ are smaller than 1 and close to 1. Furthermore, as $a$ approaches 0, both $\kappa$ and $\eta$ approach 1.

(II) As $a$ approaches 0, $\kappa$ and $\eta$ approach $\frac{s_1+s_2}{s_1s_2}$ and $\frac{s_1+s_2}{s_2^2(s_1-1)}$, respectively. Again, note that for small $a$, estimates $\Lambda_l$ and $\Lambda_h$ can be either positive or negative, and they have the same sign.
\end{proof}

For small positive $a$, the first image of $1/2$ is $W_a(1/2)=1/2+ra$ and the next one falls just below the fixed point $x^\ast_l$ slightly less than $1/2$.
The following images form a decreasing sequence until they  go below $\frac 12 -\frac{1/2+ra}{s_1+pa}$.
Since $k$ is the first iteration $j$ when the $W_a^j(1/2)$ is less than $\frac 12 -\frac{1/2+ra}{s_1+pa}$, the consecutive cumulative
slopes of $1/2$ are
$$ (s_1+pa),-(s_1+pa)(s_2+qa),-(s_1+pa)^2(s_2+qa),\dots, -(s_1+pa)^{k-1}(s_2+qa)\ ,$$
and
\begin{equation}\label{simple_fa}
 f_a= 1+ (1+\frac{s_1+pa}{s_2+qa})\Lambda \left(\frac{\chi_{[0,W_a(1/2)]}}{(s_1+pa)}+\sum _{j=2}^k\frac{\chi_{[W_a^j(1/2),1]}}{(s_1+pa)^{j-1}(s_2+qa)}+\dots
 \right).
\end{equation}

\subsection{Estimates, normalizations and integrals on $f_a$ for $\frac{1}{s_1}+\frac{1}{s_2}\leq 1$}

Remembering that $k=\min\{j\geq 1 :  W_a^j(1/2)\leq\frac 12 -\frac{1/2+ra}{s_1+pa}\}$ and $k_1=[\frac{2}{3}k]$ (the integer part of $2k/3$), we will give the estimates on $f_a$.

Let us define
$$g_l=\frac{\chi_{[0,W_a(1/2)]}}{s_1+pa}+\frac{1}{s_2+qa}\sum _{j=2}^{k_1}\frac{\chi_{[W_a^j(1/2),1]}}{(s_1+pa)^{j-1}}\ ,$$
and
$$g_h=g_l+\frac{1}{s_2+qa}\sum_{j=0}^\infty \frac 1{(s_1+pa)^{j+k_1}}=g_l+\frac 1{(s_2+qa)(s_1+pa-1)(s_1+pa)^{k_1-1}}\ .$$
Also, let $\chi_1=\chi_{[0,1/2+ra]}$, $\chi_j=\chi_{[W_a^j(1/2),1/2+ra]},
 j=2,3,\ldots,k_1$, $\chi_{c}=\chi_{(1/2+ra,1]}$.
\subsubsection{Estimates on $f_a$ if $\frac{1}{s_1}+\frac{1}{s_2}=1$  }\label{equals1}

We have the following lemma:
\begin{lemma}\label{L:estimates1}
For the family of $W_a$ maps, if $\frac{1}{s_1}+\frac{1}{s_2}=1$, we have\\
$(I)$ $W_a(1/2)=1/2+ra$, $W_a^2(1/2)=-ra(s_2+qa)+1/2+ra$, and for $3\leq m\leq k$, we have $W_a^m(1/2)=-a^2(s_1+pa)^{m-2}\frac{r(qs_1+ps_2-p-q)+rpqa}{s_1+pa-1}+\frac{s_1-1+pa-2ra}{2(s_1+pa-1)}$;\\
$(II)$ $\lim\limits_{a\rightarrow0}ak=0$;\\
$(III)$ $\lim\limits_{a\rightarrow0}\frac{1}{a(s_1+pa)^k}=0$;\\
$(IV)$ $\lim\limits_{a\rightarrow0}\frac{1}{a(s_1+pa)^{k_1}}=0$;\\
$(V)$ $\lim\limits_{a\rightarrow0}a^2(s_1+pa)^{k_1}=0$;\\
$(VI)$ $\lim\limits_{a\rightarrow0}W_a^{k_1}(\frac 1 2)=\frac 1 2$.
\end{lemma}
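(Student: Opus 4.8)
The plan is to establish (I) by explicitly iterating the formula for $W_a$ on the relevant branches, and then to derive (II)--(VI) as consequences of the growth rate dictated by (I) together with the constraint $\frac1{s_1}+\frac1{s_2}=1$ (equivalently $s_1s_2 = s_1+s_2$, so that $s_1s_2-s_1-s_2=0$, which is what makes the leading-order terms cancel).

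First I would prove (I). The value $W_a(1/2)=1/2+ra$ is immediate from the definition of the third branch evaluated at $x=1/2$. Since $1/2+ra$ lies (for small $a$) in the first branch's domain is false — rather, $1/2+ra$ lies in the fourth branch region? No: one checks $1/2+ra < \frac12 + \frac{1/2+ra}{s_2+qa}$ fails... — so I would carefully locate $W_a(1/2)=1/2+ra$ and observe it falls on the \emph{first} branch (the decreasing branch through $(0,1)$), which gives $W_a^2(1/2) = 1 - \frac{2(s_1+pa)}{s_1-1+pa-2ra}(1/2+ra)$; a short simplification using $s_1s_2=s_1+s_2$ should reduce this to $1/2 + ra - ra(s_2+qa)$ as claimed. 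For $m\ge 3$, as long as the orbit stays in the second branch (the increasing branch with slope $s_1+pa$ and fixed point $x^\ast_l$), I would write $W_a^{m}(1/2) - x^\ast_l = (s_1+pa)\big(W_a^{m-1}(1/2)-x^\ast_l\big)$, solve this linear recursion with the base case $m=3$ obtained from part (I)'s $W_a^2$ formula, and simplify the constant using the identity; the factor $(s_1+pa)^{m-2}$ and the coefficient $\frac{r(qs_1+ps_2-p-q)+rpqa}{s_1+pa-1}$ should emerge, matching the stated expression and the earlier displayed formula for $W_a(1/2)-x^\ast_r$.

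Next, (VI): since $x^\ast_l \to 1/2$ and $k_1\to\infty$, it suffices to show $a^2(s_1+pa)^{k_1}\to 0$, which is exactly (V); then the formula in (I) gives $W_a^{k_1}(1/2)\to 1/2$. So (V) $\Rightarrow$ (VI). For the remaining limits I would exploit the \emph{definition} of $k$: it is the first time the decreasing-in-modulus orbit distance $|W_a^j(1/2)-x^\ast_l|\asymp a^2(s_1+pa)^{j-2}$ exceeds the fixed gap $\frac12 - \frac{1/2+ra}{s_1+pa}\to \frac{s_1-1}{2s_1}>0$. This pins down $(s_1+pa)^{k} \asymp a^{-2}$, i.e. $(s_1+pa)^k$ grows like $1/a^2$; more precisely $k = \frac{-2\log a + O(1)}{\log(s_1+pa)}$. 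From this: (III) $\frac{1}{a(s_1+pa)^k}\asymp \frac{1}{a\cdot a^{-2}} = a \to 0$; (II) $ak \asymp a\log(1/a)\to 0$; (IV) since $k_1 = [\tfrac23 k]$, $(s_1+pa)^{k_1}\asymp (s_1+pa)^{2k/3}\asymp a^{-4/3}$, so $\frac{1}{a(s_1+pa)^{k_1}}\asymp a^{1/3}\to 0$; and (V) $a^2(s_1+pa)^{k_1}\asymp a^2 a^{-4/3} = a^{2/3}\to 0$. The choice $k_1 = [\tfrac23 k]$ is precisely calibrated so that both $\frac{1}{a(s_1+pa)^{k_1}}$ and $a^2(s_1+pa)^{k_1}$ tend to $0$ — the exponents $1/3$ and $2/3$ are both positive.

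The main obstacle is making the estimate $(s_1+pa)^k\asymp a^{-2}$ rigorous: one must control $k$ both from above and below using that the escape happens when $a^2(s_1+pa)^{k-2}\cdot C(a)$ first exceeds a constant, where $C(a)=\frac{r(qs_1+ps_2-p-q)+rpqa}{s_1+pa-1}$ is bounded and bounded away from $0$ for small $a$ (this needs $qs_1+ps_2-p-q>0$, which holds since $s_1,s_2>1$ and $p,q>0$). The one-step multiplicative factor $(s_1+pa)$ is bounded, so overshoot at step $k$ is by at most a bounded factor; this yields matching two-sided bounds $c_1 a^{-2}\le (s_1+pa)^k \le c_2 a^{-2}$, and taking logarithms gives $k\log(s_1+pa) = 2\log(1/a)+O(1)$, hence $k = \frac{2\log(1/a)}{\log s_1}\,(1+o(1))$. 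Everything else is routine substitution and passing to the limit. I would also double-check that the orbit really does stay in the second branch for $3\le m\le k$ (i.e. $W_a^m(1/2)\in I_2$ until step $k$), which is guaranteed by monotonic convergence toward $x^\ast_l$ from below combined with the definition of $k$ as the first exit.
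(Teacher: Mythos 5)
Your treatment of parts (II)--(VI) is essentially the paper's argument: the paper likewise extracts two-sided bounds on $a^2(s_1+pa)^{k-2}$ from the definition of $k$ (it uses $0\le W_a^k(1/2)\le \frac12-\frac{1/2+ra}{s_1+pa}$ rather than your ``bounded overshoot at step $k$'' phrasing, but this is the same estimate), and then (II)--(V) follow by exactly the arithmetic you describe, with (VI) deduced from (V) via the formula in (I). You correctly note that $qs_1+ps_2-p-q=q(s_1-1)+p(s_2-1)>0$, so the coefficient is bounded away from $0$. That part of the proposal is sound.

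There is, however, a genuine error in your proof of part (I), at the second iterate. The point $W_a(1/2)=1/2+ra$ is strictly greater than $1/2$, so it cannot lie in the domain $[0,\tfrac12-\tfrac{1/2+ra}{s_1+pa})$ of the first branch; and the inequality $1/2+ra<\tfrac12+\tfrac{1/2+ra}{s_2+qa}$, which you assert ``fails,'' in fact holds for small $a$ (it reduces to $ra(s_2+qa-1)<1/2$). Hence $1/2+ra$ lies in the domain of the \emph{third} branch, and $W_a^2(1/2)=-(s_2+qa)ra+1/2+ra$ follows directly; your formula $W_a^2(1/2)=1-\frac{2(s_1+pa)}{s_1-1+pa-2ra}(1/2+ra)$ is wrong (at $a=0$ it evaluates to $-\frac{1}{s_1-1}$, not $\tfrac12$), and no use of $s_1s_2=s_1+s_2$ will reduce it to the stated value. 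Since $m=3$ is the base case of your linear recursion, part (I) as you set it up does not go through; the fix is simply to use the correct branch. A second, smaller point: the orbit does not ``converge monotonically toward $x^\ast_l$'' --- $x^\ast_l$ is a \emph{repelling} fixed point of the second branch (slope $s_1+pa>1$), and $W_a^2(1/2)$ sits below it at distance $O(a^2)$ and is pushed monotonically downward, away from $x^\ast_l$, until it exits $I_2$ at time $k$; this repulsion is precisely what makes $a^2(s_1+pa)^{k-2}$ of order $1$ and drives the estimates in (II)--(VI).
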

\begin{proof}
Suppose (I) is true. Let us first prove that (II) and (III) are true.

By the definition of $k$, we have:
\begin{equation}\label{L:II1}
0\leq-a^2(s_1+pa)^{k-2}\frac{r(qs_1+ps_2-p-q)+rpqa}{s_1+pa-1}+\frac{s_1-1+pa-2ra}{2(s_1+pa-1)}\leq\frac 12 -\frac{1/2+ra}{s_1+pa}.
\end{equation}
The first inequality of (\ref{L:II1}) implies that $(s_1+pa)^{k-2}\leq\frac{s_1-1+pa-2ra}{2a^2(r(qs_1+ps_2-p-q)+rpqa)}$, thus
$$ak\leq a\frac{\ln(s_1-1+pa-2ra)-\ln 2-2\ln a -\ln(r(qs_1+ps_2-p-q)+rpqa)}{\ln(s_1+pa)}+2a,$$
$$a\leq \frac{\sqrt{s_1-1+pa-2ra}(s_1+pa)}{\sqrt{2(r(qs_1+ps_2-p-q)+rpqa)}(s_1+pa)^{k/2}},$$
$$a^2(s_1+pa)^{k_1}\leq \frac{(s_1-1+pa-2ra)(s_1+pa)^2}{2(r(qs_1+ps_2-p-q)+rpqa)(s_1+pa)^{k-k_1}},$$
so we obtain (V), and since $\lim\limits_{a\rightarrow0}a\ln a=0$, we obtain (II).

The second inequality of (\ref{L:II1}) implies $$\frac{1}{a(s_1+pa)^{k-2}}\leq \frac{2a(r(qs_1+ps_2-p-q)+rpqa)(s_1+pa)}{s_1-1+pa-2ra}.$$ Therefore,
\begin{equation}\label{L:III1}
\frac{1}{a(s_1+pa)^k}\leq \frac{2a(r(qs_1+ps_2-p-q)+rpqa)}{(s_1-1+pa-2ra)(s_1+pa)},
\end{equation}
and as $a\rightarrow0$, we obtain (III).

On the other hand, (\ref{L:III1}) implies
\begin{eqnarray*}
\frac{1}{a(s_1+pa)^{k_1}}&\leq& \frac{2a(r(qs_1+ps_2-p-q)+rpqa)(s_1+pa)^{k-k_1}}{(s_1+pa-2ra-1)(s_1+pa)}\\
&\leq & \frac{\sqrt{2(r(qs_1+ps_2-p-q)+rpqa)}(s_1+pa)^{k-k_1}}{\sqrt{s_1+pa-2ra-1}(s_1+pa)^{k/2}}\\
&=&\frac{\sqrt{2(r(qs_1+ps_2-p-q)+rpqa)}}{\sqrt{s_1+pa-2ra-1}(s_1+pa)^{k_1-k/2}}.
\end{eqnarray*}
By the definition of $k_1$,  we obtain (IV). (VI) follows from (V).

Now, let us prove (I).

The fixed point slightly less than $1/2$ is $x^\ast_l=\frac{s_1-1+pa-2ra}{2(s_1-1+pa)}$, and $$x^\ast_l-W_a^2(1/2)=\frac{ra^2(q(s_1-1)+p(s_2-1)+apq)}{s_1-1+pa}>0,$$
which implies that $W_a^m(1/2)$ are all in the domain of the second branch of $W_a$ for $3\leq m\leq k$. For a linear map $T(x)=m_0x+b_0$, we have $T^n(x)=m_0^nx+\frac{m_0^n-1}{m_0-1}b_0$. This proves (I).
\end{proof}
Using (\ref{simple_fa}) and (\ref{estimate_A}) we see that for the functions
$f_l=1+(1+\frac{s_1+pa}{s_2+qa})\Lambda_lg_h$ and $ f_h=1+(1+\frac{s_1+pa}{s_2+qa})\Lambda_hg_l$, we have
\begin{equation}\label{inequality1}
 f_l\le f_a\le f_h\ .
\end{equation}
Now, we will represent functions $f_l$ and $f_c$ as combinations of functions
$\chi_j$, $j=1,\dots,k_1$ and $\chi_c$. After some calculations, we obtain
\begin{eqnarray*}
f_l
&=& 1+(1+\frac{s_1+pa}{s_2+qa}) \Lambda_l\bigg( \frac{\chi_{[0,W_a(1/2)]}}{s_1+pa}+\frac{1}{s_2+qa}\sum _{j=2}^{k_1}\frac{\chi_{[W_a^j(1/2),1]}}{(s_1+pa)^{j-1}}\\
&&+\frac 1{(s_2+qa)(s_1+pa-1)(s_1+pa)^{k_1-1}}\bigg)\\
&=& \left({\frac{s_1+s_2+pa+qa}{(s_2+qa)(s_1+pa)}}\Lambda_l+1\right)\chi_1+\frac{s_1+s_2+pa+qa}{(s_2+qa)^2}\Lambda_l\sum _{j=2}^{k_1}\frac{\chi_{j}}{(s_1+pa)^{j-1}}\\
&& + \left(\frac{s_1+s_2+pa+qa}{(s_2+pa)^2}\Lambda_l\frac{1-\frac{1}{(s_1+pa)^{k_1-1}}}{s_1+pa-1}+1\right)\chi_c \\
&&+\frac {{\frac{s_1+s_2+pa+qa}{s_2+qa}}\Lambda_l}{(s_2+qa)(s_1+pa-1)(s_1+pa)^{k_1-1}}\ ,
\end{eqnarray*}
\begin{eqnarray*}
f_h
&=& 1+(1+\frac{s_1+pa}{s_2+qa}) \Lambda_h\bigg( \frac{\chi_{[0,W_a(1/2)]}}{s_1+pa}+\frac{1}{s_2+qa}\sum _{j=2}^{k_1}\frac{\chi_{[W_a^j(1/2),1]}}{(s_1+pa)^{j-1}}\bigg)\\
&=& \left({\frac{s_1+s_2+pa+qa}{(s_2+qa)(s_1+pa)}}\Lambda_h+1\right)\chi_1+\frac{s_1+s_2+pa+qa}{(s_2+qa)^2}\Lambda_h\sum _{j=2}^{k_1}\frac{\chi_{j}}{(s_1+pa)^{j-1}}\\
&& + \left(\frac{s_1+s_2+pa+qa}{(s_2+qa)^2}\Lambda_h\frac{1-\frac{1}{(s_1+pa)^{k_1-1}}}{s_1+pa-1}+1\right)\chi_c\ .
\end{eqnarray*}
In the case we are considering, (\ref{estimate_A}) implies that both $\Lambda_l$, $\Lambda_h$ are smaller than -1. Using this,
one can show that all the coefficients in the representation of $f_l$ and $f_h$ are negative
for sufficiently small $a$. For example, let us consider the coefficient of $\chi_1$ in $f_h$:
$${\frac{s_1+s_2+pa+qa}{(s_2+qa)(s_1+pa)}}\Lambda_h+1=\frac{\kappa}{1-(\kappa+\eta)}+1=\frac{1-\eta}{1-(\kappa+\eta)}<0\ .$$

\subsubsection{Normalizations and integrals if $\frac{1}{s_1}+\frac{1}{s_2}=1$ }\label{normequals1}
Let us define $J_1=[0,W_a^{k_1}(1/2)]$,
 $J_2=(W_a^{k_1}(1/2),1/2+ra]$, $J_3=(1/2+ra,1]$.
We will calculate integrals of $f_h$ over each of these intervals $J_1$, $J_2$ and $J_3$, and use them
to normalize $f_h$. We have
\begin{eqnarray*}
C_1& =&\int_{J_1} f_{h} \  d\lambda
=\int_{J_1} \left[{\frac{s_1+s_2+pa+qa}{(s_2+qa)(s_1+pa)}}\Lambda_h+1\right]\chi_1 \  d\lambda\\
 &=&\left[{\frac{s_1+s_2+pa+qa}{(s_2+qa)(s_1+pa)}}\Lambda_h+1\right]W_a^{k_1}(\frac 1 2)
=\left[\frac{\kappa}{1-(\kappa+\eta)}+1\right]W_a^{k_1}(\frac 1 2)\\
 &=&\bigg[\frac{a(2qs_1s_2+ps_2^2-2qs_2-p-q)}{(1-(\kappa+\eta))(s_2+qa)^2(s_1+pa-1)}\\
 &&\quad+\frac{a^2(2pqs_2-q^2+q^2s_1)+pq^2a^3}{(1-(\kappa+\eta))(s_2+qa)^2(s_1+pa-1)}\bigg]W_a^{k_1}(\frac 1 2)\ .
\end{eqnarray*}
Using  Lemma \ref{L:estimates1}, we obtain
$$\lim\limits_{a\rightarrow0}\frac{C_1}{a}=-\frac{2qs_1s_2+ps_2^2-2qs_2-p-q}{2s_2^2(s_1-1)}=-\frac{2qs_1+ps_2^2-p-q}{2s_2s_1}.$$
In the same way, we can see that for any $0<\theta<1/2$, we obtain
$$\lim\limits_{a\rightarrow0}\frac 1 a {\int_0^\theta}f_h d\lambda=-\frac{2qs_1+ps_2^2-p-q}{s_2s_1} \theta\ .$$
On the interval $J_2$, the integral of $f_h$ is:
\begin{eqnarray*}
C_2=\int_{J_2} f_{h} \  d\lambda&=& \int_{J_2} \left[{\frac{s_1+s_2+pa+qa}{(s_2+qa)(s_1+pa)}}\Lambda_h+1\right]\chi_1 \  d\lambda\\
&& + \frac{s_1+s_2+pa+qa}{(s_2+qa)^2}\Lambda_h\sum _{j=2}^{k_1}\int_{J_2}\frac{\chi_{j}}{(s_1+a)^{j-1}} \  d\lambda\\
&=& \frac{1-\eta}{1-(\kappa+\eta)}\left(\frac 1 2+ra-W_a^{k_1}(\frac 1 2)\right)\\
&& + \frac{s_1+s_2+pa+qa}{(s_2+qa)^2}\Lambda_h\bigg[\frac{ra(s_2+qa)}{s_1+pa}+\frac{ra(1-\frac{1}{(s_1+pa)^{k_1-2}})}{(s_1+pa-1)^2}\\
&& +\frac{a^2(k_1-2)}{s_1+pa}\frac{r(qs_1+ps_2-p-q)+rpqa}{s_1+pa-1}\bigg]\ .
\end{eqnarray*}
Using  Lemma \ref{L:estimates1}, we obtain
\[\lim\limits_{a\rightarrow0}\frac{C_2}{a}=-\frac{s_1+s_2}{s_2^2}\left[\frac{rs_2}{s_1}+\frac{r}{(s_1-1)^2}\right]=-rs_2.\]

On the interval $J_3$, the integral of $f_h$ is:
\begin{eqnarray*}
C_3=\int_{J_3} f_{h} \  d\lambda&=& \int_{J_3} \left(\frac{s_1+s_2+pa+qa}{(s_2+qa)^2}\Lambda_h\frac{1-\frac{1}{(s_1+pa)^{k_1-1}}}{s_1+pa-1}+1\right)\chi_c \  d\lambda\\
&=& \left[\left(1-\frac{1}{(s_1+pa)^{k_1-1}}\right)\frac{\eta}{1-(\kappa+\eta)}+1\right](\frac 12-ra)\\
&=& \frac{\frac{a(qs_1+ps_2-p-q)+pqa^2}{(s_1+pa)(s_2+qa)}-\frac{\eta}{(s_1+pa)^{k_1-1}}}{1-(\kappa+\eta)}(\frac{1}{2}-ra)\ .
\end{eqnarray*}
Using Lemma  \ref{L:estimates1}, we obtain
$$\lim\limits_{a\rightarrow0}\frac{C_3}{a}=-\frac{qs_1+ps_2-p-q}{2s_1s_2} \ .$$
In the same way, we can see that for any $0<\theta<1/2$, we obtain
$$\lim\limits_{a\rightarrow0}\frac 1 a {\int_{1/2+\theta}^1}f_h d\lambda=-\frac{qs_1+ps_2-p-q}{s_1s_2} \left(\frac 12-\theta\right)\ .$$

If we define $B=C_1+C_2+C_3$, then $\frac{f_h} B$ is a normalized density. We see that
$$\lim\limits_{a\rightarrow0}\frac{B}{a}=-\frac{(qs_1+ps_2-p-q)(s_2+2)+2rs_1s_2^2}{2s_1s_2} \ .$$

Our calculations show that the normalized measures $\{(f_h/B)\cdot\lambda\}$ converge $*$-weakly
 to the measure $$\frac{(qs_1+ps_2-p-q)(s_2+2)}{(qs_1+ps_2-p-q)(s_2+2)+2rs_1s_2^2}\mu_0+\frac{2rs_1s_2^2}{(qs_1+ps_2-p-q)(s_2+2)+2rs_1s_2^2} \delta_{(\frac 12)}\ .$$

Now, we will show the same holds for the normalized measure defined by $f_l$. To this end, let us notice that
 \begin{eqnarray*}
f_h-f_l&=&(1+\frac{s_1+pa}{s_2+qa})\Lambda_hg_l-(1+\frac{s_1+pa}{s_2+qa})\Lambda_lg_h\\
&=& (1+\frac{s_1+pa}{s_2+qa})(\Lambda_h-\Lambda_l)g_l-\Lambda_l \frac {1+\frac{s_1+pa}{s_2+qa}}{(s_2+qa)(s_1+pa-1)(s_1+pa)^{k_1-1}}\\
&=&
(1+\frac{s_1+pa}{s_2+qa})\frac{\frac{\eta}{(s_1+pa)^{k_1-1}}}{[1-(\kappa+\eta)][1-\kappa-\eta(1-\frac{1}{(s_1+pa)^{k_1-1}})]}g_l\\
& & -\Lambda_l \frac {1+\frac{s_1+pa}{s_2+qa}}{(s_2+qa)(s_1+pa-1)(s_1+pa)^{k_1-1}}\ ,
\end{eqnarray*}
where $|g_l|\le \frac{2}{s_1}$ and $\lim\limits_{a\to 0} \Lambda_l=-1$.
Using Lemma  \ref{L:estimates1} once again,  we can show that for any subinterval $J\subset[0,1]$, we have
\[ \lim_{a\to 0}\frac 1 a \int_J (f_h-f_l)d\lambda=0\ .\]
For $J=[0,1]$ this means that the normalizations of $f_l$  and $f_h$ are asymptotically the same.
With this, the limit for  a general $J$ means in particular that the $*$-weak  limit of
normalized measures defined using $f_l$ is the same as for those defined using $f_h$.
In view of inequality (\ref{inequality1}), this proves Theorem \ref{main}(II).

\subsubsection{Estimates on $f_a$ if $\frac{1}{s_1}+\frac{1}{s_2}<1$}\label{less1}

 We have the following lemma:
\begin{lemma}\label{L:estimates}
For the family of $W_a$ maps, if $\frac{1}{s_1}+\frac{1}{s_2}<1$, we have\\
$(I)$ $W_a(1/2)=1/2+ra$, $W_a^2(1/2)=-ra(s_2+qa)+1/2+ra$, and for $3\leq m\leq k$, we have $W_a^m(1/2)=-a(s_1+pa)^{m-2}\frac{r\left[s_1s_2-s_1-s_2+a(qs_1+ps_2-p-q+pqa)\right]}{s_1+pa-1}+\frac{s_1-1+pa-2ra}{2(s_1+pa-1)}$;\\
$(II)$ $\lim\limits_{a\rightarrow0}ak=0$;\\
$(III)$ $\lim\limits_{a\rightarrow0}a(s_1+pa)^{k_1}=0$;\\
$(IV)$ $\lim\limits_{a\rightarrow0}W_a^{k_1}(\frac 1 2)=\frac 1 2$.
\end{lemma}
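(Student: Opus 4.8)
The plan is to follow the proof of Lemma \ref{L:estimates1} almost verbatim. The only structural difference is that here $\frac{1}{s_1}+\frac{1}{s_2}<1$ means $s_1s_2-s_1-s_2>0$, so several quantities that were of order $a^2$ in the equality case are now of order $a$; this is exactly why the formula in $(I)$ carries one power of $a$ and why $(III)$ asserts $a(s_1+pa)^{k_1}\to 0$ rather than $a^2(s_1+pa)^{k_1}\to 0$.

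First I would prove $(I)$. Substituting $x=1/2$ into the third branch gives $W_a(1/2)=1/2+ra$, and since $ra(s_2+qa-1)<1/2$ for small $a$, the point $1/2+ra$ again lies in the third branch, so $W_a^2(1/2)=-ra(s_2+qa)+1/2+ra$. Recall that the second branch $T(x)=(s_1+pa)(x-1/2)+1/2+ra$ is affine with expanding slope $s_1+pa>1$ and repelling fixed point $x^\ast_l=\frac{s_1-1+pa-2ra}{2(s_1-1+pa)}$. A direct computation gives
\[x^\ast_l-W_a^2(1/2)=\frac{ra\left[s_1s_2-s_1-s_2+a(qs_1+ps_2-p-q+pqa)\right]}{s_1+pa-1}>0\]
for small $a$, because $s_1s_2-s_1-s_2>0$; in particular $W_a^2(1/2)<x^\ast_l<1/2$, while $W_a^2(1/2)>1/2-\frac{1/2+ra}{s_1+pa}$ for small $a$, so $k\ge 3$. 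Since $T$ is increasing and sends points below $x^\ast_l$ to points still lower, the iterates $W_a^m(1/2)$ form a strictly decreasing sequence that stays in the domain $\left[1/2-\frac{1/2+ra}{s_1+pa},1/2\right)$ of the second branch for all $m$ with $2\le m\le k-1$; moreover $|W_a^m(1/2)-x^\ast_l|=(s_1+pa)^{m-2}|W_a^2(1/2)-x^\ast_l|\to\infty$, so $k<\infty$. Then for $3\le m\le k$ the affine iteration identity $T^{n}(x)=(s_1+pa)^{n}(x-x^\ast_l)+x^\ast_l$, applied with $n=m-2$ and $x=W_a^2(1/2)$, yields precisely the asserted formula for $W_a^m(1/2)$.

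Next I would deduce $(II)$, $(III)$, $(IV)$ from $(I)$ and the defining inequalities $0\le W_a^k(1/2)\le 1/2-\frac{1/2+ra}{s_1+pa}$. Inserting the formula of $(I)$ with $m=k$ into $0\le W_a^k(1/2)$ gives $a(s_1+pa)^{k-2}\cdot\frac{r\left[s_1s_2-s_1-s_2+a(\cdots)\right]}{s_1+pa-1}\le x^\ast_l$; since the coefficient on the left tends to $\frac{r(s_1s_2-s_1-s_2)}{s_1-1}>0$ and $x^\ast_l\to 1/2$, this forces $(s_1+pa)^{k-2}\le C/a$ and hence $(s_1+pa)^{k}\le C_1/a$ for small $a$. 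Taking logarithms, $k\le \frac{C_2-\ln a}{\ln(s_1+pa)}+2$; multiplying by $a$ and using $a\ln a\to 0$ and $\ln(s_1+pa)\to\ln s_1>0$ gives $ak\to 0$, which is $(II)$. From $(s_1+pa)^{k}\le C_1/a$, together with $k_1=[\frac23 k]\le\frac23 k$ and $s_1+pa>1$, we get $(s_1+pa)^{k_1}\le\left((s_1+pa)^{k}\right)^{2/3}\le (C_1/a)^{2/3}$, hence $a(s_1+pa)^{k_1}\le C_1^{2/3}a^{1/3}\to 0$, which is $(III)$. Finally, since $k_1\to\infty$ we may use $(I)$ with $m=k_1$: $W_a^{k_1}(1/2)=x^\ast_l-a(s_1+pa)^{k_1-2}\frac{r\left[s_1s_2-s_1-s_2+a(\cdots)\right]}{s_1+pa-1}$; the subtracted term is bounded by a constant times $a(s_1+pa)^{k_1}$ and so tends to $0$ by $(III)$, while $x^\ast_l\to 1/2$, giving $(IV)$.

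The only delicate bookkeeping is in $(I)$: checking that the trajectory of $1/2$ enters the third branch at the first step, then remains in the second branch up to time $k$, and that $k$ is finite. All of this reduces to the sign of $x^\ast_l-W_a^2(1/2)$ (positive here precisely because $\frac{1}{s_1}+\frac{1}{s_2}<1$) and to the expansivity of the second branch, and it is the same argument already carried out for Lemma \ref{L:estimates1}$(I)$. Once $(I)$ is available, parts $(II)$--$(IV)$ are elementary estimates of the type used there.
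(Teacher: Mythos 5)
Your proposal is correct and follows essentially the same route as the paper's proof: establish the explicit orbit formula in $(I)$ via the sign of $x^\ast_l-W_a^2(1/2)$ and the affine iteration on the second branch, then extract $(II)$--$(IV)$ from the defining inequality $0\le W_a^k(1/2)$, which gives $(s_1+pa)^{k-2}\le C/a$. The only cosmetic differences are the order of the parts and your derivation of $(III)$ via $a(s_1+pa)^{k_1}\le C_1^{2/3}a^{1/3}$ instead of the paper's bound involving $(s_1+pa)^{-(k-k_1)}$; both rest on the same estimate.
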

\begin{proof}
Suppose (I) is true. Let us first prove that (II) and (III) are true.

By the definition of $k$, we have:
\begin{equation}\label{L:II}
\begin{split}
0\leq &-a(s_1+pa)^{k-2}\frac{r\left[s_1s_2-s_1-s_2+a(qs_1+ps_2-p-q+pqa)\right]}{s_1+pa-1}\\
&+\frac{s_1-1+pa-2ra}{2(s_1+pa-1)}.
\end{split}
\end{equation}
The inequality (\ref{L:II}) implies $a(s_1+pa)^{k-2}\leq\frac{s_1-1+pa-2ra}{2r\left[s_1s_2-s_1-s_2+a(qs_1+ps_2-p-q+pqa)\right]}$, thus
\begin{eqnarray*}
ak\leq &&a\frac{\ln(s_1-1+pa-2ra)-\ln 2+2\ln(s_1+pa)-\ln r-\ln a}{\ln(s_1+pa)}\\
&&-a\frac{\ln(2r\left[s_1s_2-s_1-s_2+a(qs_1+ps_2-p-q+pqa)\right])}{\ln(s_1+pa)},\\
a(s_1+pa)^{k_1}\leq && \frac{(s_1-1+pa-2ra)(s_1+pa)^2}{2r\left[s_1s_2-s_1-s_2+a(qs_1+ps_2-p-q+pqa)\right](s_1+pa)^{k-k_1}},
\end{eqnarray*}
and since $\lim\limits_{a\rightarrow0}a\ln a=0$, we obtain (II) and (III). (IV) follows from (III).

Now, let us prove (I).

The fixed point slightly less than $1/2$ is $x^\ast_l=\frac{s_1-1+pa-2ra}{2(s_1-1+pa)}$, and $$x^\ast_l-W_a^2(1/2)=\frac{ra\left[s_1s_2-s_1-s_2+a(qs_1+ps_2-p-q+pqa)\right]}{s_1-1+pa}>0,$$
which implies that $W_a^m(1/2)$ are all in the domain of the second branch of $W_a$ for $3\leq m\leq k$. Now, (I) follows by the same reasoning as in Lemma \ref{L:estimates1}.
\end{proof}
\begin{lemma}\label{L:convergence}
If the normalized densities $\{h_a\}_{a<a_0}$, for some $a_0>0$,  are uniformly bounded, then $h_a\rightarrow h_0$ in $L^1$.
\end{lemma}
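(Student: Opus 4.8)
The plan is to upgrade the hypothesised uniform $L^{\infty}$ bound to a uniform bound on the total variation of the $h_{a}$'s, use Helly's selection principle to extract subsequences of the family converging in $L^{1}$, identify every limit point with $h_{0}$ via uniqueness of the acim of $W_{0}$, and finally deduce $L^{1}$-convergence of the whole family by a standard subsequence argument.

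For the variation bound, write $P_{a}$ for the Perron--Frobenius operator of $W_{a}$ with respect to Lebesgue measure $\lambda$, let $\sigma_{1},\dots,\sigma_{4}$ be the slopes of the four affine branches of $W_{a}$ and $\psi_{1},\dots,\psi_{4}$ the corresponding inverse branches, so that $P_{a}\phi=\sum_{i}|\sigma_{i}|^{-1}(\phi\circ\psi_{i})\chi_{W_{a}(I_{i})}$. Estimating the variation of each summand (each $\psi_{i}$ is affine, so composition does not change the variation, and the two endpoint jumps are bounded by $\|\phi\|_{\infty}$) and using additivity of the variation over the monotonicity intervals $I_{i}$ gives the Lasota--Yorke-type inequality (cf.\ \cite{BG})
\[
\operatorname{Var}(P_{a}\phi)\ \le\ \Bigl(\min_{i}|\sigma_{i}|\Bigr)^{-1}\operatorname{Var}(\phi)\ +\ 2\|\phi\|_{\infty}\sum_{i}|\sigma_{i}|^{-1}.
\]
For $a<a_{0}$ all four slopes of $W_{a}$ stay bounded away from $1$ in modulus while $\sum_{i}|\sigma_{i}|^{-1}$ stays bounded, say $\min_{i}|\sigma_{i}|\ge\sigma_{\ast}>1$ and $\sum_{i}|\sigma_{i}|^{-1}\le\Sigma_{\ast}$; applying the inequality to $\phi=h_{a}=P_{a}h_{a}$ and using $\|h_{a}\|_{\infty}\le M$ yields
\[
\operatorname{Var}(h_{a})\ \le\ \frac{2M\Sigma_{\ast}}{1-\sigma_{\ast}^{-1}}\ =:\ V\qquad (a<a_{0}),
\]
so $\{h_{a}\}_{a<a_{0}}$ is bounded in $BV[0,1]$ as well as in $L^{\infty}$.

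Now fix any sequence $a_{n}\downarrow 0$. By Helly's theorem there is a subsequence with $h_{a_{n_{j}}}\to h$ pointwise on $[0,1]$, and since $|h_{a_{n_{j}}}|\le M$ on the finite-measure space $[0,1]$ dominated convergence upgrades this to $h_{a_{n_{j}}}\to h$ in $L^{1}$, with $h\ge 0$ and $\int_{0}^{1}h\,d\lambda=1$. To pass to the limit in the fixed-point equation, note $W_{a}\to W_{0}$ uniformly on $[0,1]$ (both maps are continuous, their breakpoints converge, and their affine pieces converge uniformly), hence $g\circ W_{a}\to g\circ W_{0}$ uniformly for each $g\in C[0,1]$. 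From
\[
\int_{0}^{1} h_{a_{n_{j}}}\,g\,d\lambda=\int_{0}^{1} (P_{a_{n_{j}}}h_{a_{n_{j}}})\,g\,d\lambda=\int_{0}^{1} h_{a_{n_{j}}}\,(g\circ W_{a_{n_{j}}})\,d\lambda
\]
together with $\bigl|\int_{0}^{1}h_{a_{n_{j}}}(g\circ W_{a_{n_{j}}}-g\circ W_{0})\,d\lambda\bigr|\le M\,\|g\circ W_{a_{n_{j}}}-g\circ W_{0}\|_{\infty}\to 0$, letting $j\to\infty$ gives $\int_{0}^{1}h\,g\,d\lambda=\int_{0}^{1}h\,(g\circ W_{0})\,d\lambda=\int_{0}^{1}(P_{0}h)\,g\,d\lambda$ for all $g\in C[0,1]$, i.e.\ $P_{0}h=h$. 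Thus $h\,d\lambda$ is an absolutely continuous $W_{0}$-invariant probability measure, and by uniqueness of the acim of $W_{0}$ (which follows from the covering property established in the proof of the preceding Proposition) we get $h=h_{0}$ a.e. Since every sequence $a_{n}\downarrow 0$ admits a subsequence with $h_{a_{n_{j}}}\to h_{0}$ in $L^{1}$, it follows that $h_{a}\to h_{0}$ in $L^{1}$ as $a\to 0$.

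I expect the real content to be the first step: the classical Lasota--Yorke contraction factor $2/\inf|W_{a}'|$ need not be $<1$ here, because the inner slopes $s_{1},s_{2}$ may lie in $(1,2)$, so the usual single-step argument does not give a uniform variation bound; it is precisely the hypothesised uniform $L^{\infty}$ bound that lets one instead run the estimate with the harmless factor $(\min_{i}|\sigma_{i}|)^{-1}<1$ at the cost of a $\|\phi\|_{\infty}$ term. The only other point needing care is the joint limit in operator and function in the identification step, but the splitting above reduces it to the elementary uniform convergence $W_{a}\to W_{0}$ for this explicit family.
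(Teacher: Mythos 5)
Your proof is correct, but it takes a genuinely different route from the paper. The paper's argument is soft and very short: uniform boundedness of $\{h_a\}$ gives weak precompactness in $L^1$ (Dunford--Pettis), a cited stability result (Proposition 11.3.1 of \cite{BG}) identifies every limit point as a $W_0$-invariant density, and uniqueness of the acim of the exact map $W_0$ pins the limit down to $h_0$. You instead upgrade the $L^\infty$ bound to a uniform $BV$ bound via a modified Lasota--Yorke inequality, extract \emph{strongly} $L^1$-convergent subsequences by Helly plus dominated convergence, and identify the limit by hand through the duality $\int h_a\,g\,d\lambda=\int h_a\,(g\circ W_a)\,d\lambda$ and the uniform convergence $W_a\to W_0$. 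Your version is self-contained (no appeal to the perturbation proposition in \cite{BG}) and delivers strong $L^1$ convergence directly, whereas the paper's weak-compactness argument as literally written only yields weak $L^1$ convergence of subsequences and leans on the cited proposition to close the gap; your observation that the hypothesised $L^\infty$ bound rescues the variation estimate when $\min(s_1,s_2)<2$ is exactly the right point. One small step you should make explicit: the fixed-point subtraction $\operatorname{Var}(h_a)\le\sigma_\ast^{-1}\operatorname{Var}(h_a)+2M\Sigma_\ast$ requires knowing a priori that $\operatorname{Var}(h_a)<\infty$; this is available here because formula (\ref{simple_fa}) exhibits $f_a$ as a series of characteristic functions of intervals with geometrically decaying coefficients, hence of finite variation.
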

\begin{proof}
The uniform boundedness implies $\{h_a\}_{a<a_0}$ is a weakly precompact set in $L^1$. Thus, any limit of $\{h_a\}_{a<a_0}$ is a invariant density by Proposition 11.3.1 \cite{BG}. At the same time, this limit is an $L^1$ function, thus defines an absolutely continuous invariant measure. Since the map $W_0$ is exact and has only one acim, we conclude that $h_a\rightarrow h_0$ in $L^1$.
\end{proof}
Now, we will prove Theorem \ref{way2}:

The main idea of the proof is the following: since non-normalized densities $\{f_a\}$ are uniformly bounded (formulas (\ref{inequality}, \ref{inequality11}, \ref{L:upper})), it is enough to show that $\{\int^1_0 f_a\ d\lambda\}$ are uniformly separated from zero.

For small $a$, by Lemma \ref{density_fa}, $\Lambda$ (and then both $\Lambda_l$ and $\Lambda_h$) can be either positive or negative. Thus, we can have the following cases.

\textbf{Case (i): $\Lambda_l<0$}:

Comparing with (\ref{simple_fa}) and (\ref{estimate_A}), we see that for the functions
$\widehat{f}_l=1+(1+\frac{s_1+pa}{s_2+qa})\Lambda_lg_h$ and $ \widehat{f}_h=1+(1+\frac{s_1+pa}{s_2+qa})\Lambda_hg_l$, we have
\begin{equation}\label{inequality}
 \widehat{f}_l\le f_a\le \widehat{f}_h\ .
\end{equation}

Note that $\widehat{f}_l$ and $\widehat{f}_h$ have the same form as $f_l$ and $f_h$ in Section \ref{equals1}, so their representations as combinations of functions
$\chi_j$, $j=1,\dots,k_1$ and $\chi_c$ are similar to that of $f_l$ and $f_h$. At the same time, now we have $\frac{1}{s_1}+\frac{1}{s_2}<1$, so the representation is as follows:
\begin{eqnarray*}
\widehat{f}_l
&=& \left({\frac{s_1+s_2+pa+qa}{(s_2+qa)(s_1+pa)}}\Lambda_l+1\right)\chi_1+\frac{s_1+s_2+pa+qa}{(s_2+qa)^2}\Lambda_l\sum _{j=2}^{k_1}\frac{\chi_{j}}{(s_1+pa)^{j-1}}\\
&& + \left(\frac{s_1+s_2+pa+qa}{(s_2+pa)^2}\Lambda_l\frac{1-\frac{1}{(s_1+pa)^{k_1-1}}}{s_1+pa-1}+1\right)\chi_c \\
&&+\frac {{\frac{s_1+s_2+pa+qa}{s_2+qa}}\Lambda_l}{(s_2+qa)(s_1+pa-1)(s_1+pa)^{k_1-1}}\ ,
\end{eqnarray*}
\begin{eqnarray*}
\widehat{f}_h
&=& \left({\frac{s_1+s_2+pa+qa}{(s_2+qa)(s_1+pa)}}\Lambda_h+1\right)\chi_1+\frac{s_1+s_2+pa+qa}{(s_2+qa)^2}\Lambda_h\sum _{j=2}^{k_1}\frac{\chi_{j}}{(s_1+pa)^{j-1}}\\
&& + \left(\frac{s_1+s_2+pa+qa}{(s_2+qa)^2}\Lambda_h\frac{1-\frac{1}{(s_1+pa)^{k_1-1}}}{s_1+pa-1}+1\right)\chi_c\ .
\end{eqnarray*}
(\ref{estimate_A}) implies that all the coefficients in the representation of $\widehat{f}_l$ and $\widehat{f}_h$ are negative
for sufficiently small $a$.

We use the same notations $J_1$, $J_2$ and $J_3$ as in Section \ref{normequals1}. First, we do the calculations assuming that $\vartheta=1-\left(\frac{s_1+s_2}{s_1s_2}+\frac{s_1+s_2}{s_2^2(s_1-1)}\right)\neq 0$.

We will calculate the integrals of $\widehat{f}_h$ over each of $J_1$, $J_2$ and $J_3$, and use them
to normalize $\widehat{f}_h$. We have
\begin{eqnarray*}
\widehat{C}_1&=&\int_{J_1} \widehat{f}_{h} \  d\lambda
= \int_{J_1} \left[{\frac{s_1+s_2+pa+qa}{(s_2+qa)(s_1+pa)}}\Lambda_h+1\right]\chi_1 \  d\lambda\\
&=& \left[{\frac{s_1+s_2+pa+qa}{(s_2+qa)(s_1+pa)}}\Lambda_h+1\right]W_a^{k_1}(\frac 1 2)
= \left[\frac{\kappa}{1-(\kappa+\eta)}+1\right]W_a^{k_1}(\frac 1 2)\\
&=& \bigg[\frac{s_1s_2^2-s_1-s_2-s_2^2}{1-(\kappa+\eta))(s_2+qa)^2(s_1+pa-1)}\\
&& +\frac{a(2qs_1s_2+ps_2^2-2qs_2-p-q)}{(1-(\kappa+\eta))(s_2+qa)^2(s_1+pa-1)}\\
&& +\frac{a^2(2pqs_2-q^2+q^2s_1)+pq^2a^3}{(1-(\kappa+\eta))(s_2+qa)^2(s_1+pa-1)}\bigg]W_a^{k_1}(\frac 1 2)\ .
\end{eqnarray*}
Using  Lemma \ref{L:estimates}, we have
\begin{eqnarray*}
\lim\limits_{a\rightarrow0}\widehat{C}_1=\frac{1}{2}\frac{\frac{s_1s_2^2-s_1-s_2-s_2^2}{s_2^2(s_1-1)}}{1-\left(\frac{s_1+s_2}{s_1s_2}+\frac{s_1+s_2}{s_2^2(s_1-1)}\right)}
=\frac{1}{2}\frac{1-\frac{s_1+s_2}{s_2^2(s_1-1)}}{1-\left(\frac{s_1+s_2}{s_1s_2}+\frac{s_1+s_2}{s_2^2(s_1-1)}\right)}.
\end{eqnarray*}

On the interval $J_2$, the integral of $\widehat{f}_h$ is:
\begin{eqnarray*}
\widehat{C}_2=\int_{J_2} \widehat{f}_{h} \  d\lambda&=& \int_{J_2} \left[{\frac{s_1+s_2+pa+qa}{(s_2+qa)(s_1+pa)}}\Lambda_h+1\right]\chi_1 \  d\lambda\\
&& + \frac{s_1+s_2+pa+qa}{(s_2+qa)^2}\Lambda_h\sum _{j=2}^{k_1}\int_{J_2}\frac{\chi_{j}}{(s_1+pa)^{j-1}} \  d\lambda\\
&=& \frac{1-\eta}{1-(\kappa+\eta)}\left(\frac 1 2+ra-W_a^{k_1}(\frac 1 2)\right)\\
&& + \frac{s_1+s_2+pa+qa}{(s_2+qa)^2}\Lambda_h\bigg[\frac{ra(s_2+qa)}{s_1+pa}+\frac{ra(1-\frac{1}{(s_1+pa)^{k_1-2}})}{(s_1+pa-1)^2}\\
&& +\frac{a(k_1-2)}{s_1+pa}\frac{r(s_1s_2-s_1-s_2+a(qs_1+ps_2-p-q+pqa))}{s_1+pa-1}\bigg]\ .
\end{eqnarray*}
Using  Lemma \ref{L:estimates}, we have
$\lim\limits_{a\rightarrow0} \widehat{C}_2=0.$

On the interval $J_3$, the integral of $\widehat{f}_h$ is:
\begin{eqnarray*}
\widehat{C}_3=\int_{J_3} \widehat{f}_{h} \  d\lambda&=& \int_{J_3} \left(\frac{s_1+s_2+pa+qa}{(s_2+qa)^2}\Lambda_h\frac{1-\frac{1}{(s_1+pa)^{k_1-1}}}{s_1+pa-1}+1\right)\chi_c \  d\lambda\\
&=& \left[\left(1-\frac{1}{(s_1+pa)^{k_1-1}}\right)\frac{\eta}{1-(\kappa+\eta)}+1\right](\frac 12-ra)\\
&=& \frac{\frac{s_1s_2-s_1-s_2+a(qs_1+ps_2-p-q)+pqa^2}{(s_1+pa)(s_2+qa)}-\frac{\eta}{(s_1+pa)^{k_1-1}}}{1-(\kappa+\eta)}(\frac{1}{2}-ra)\ .
\end{eqnarray*}
Using Lemma  \ref{L:estimates} once again, we have
$$\lim\limits_{a\rightarrow0}\widehat{C}_3=\frac{1}{2}\frac{1-\frac{s_1+s_2}{s_1s_2}}{1-\left(\frac{s_1+s_2}{s_1s_2}+\frac{s_1+s_2}{s_2^2(s_1-1)}\right)} \ .$$

Note that if we define $\widehat{B}=\widehat{C}_1+\widehat{C}_2+\widehat{C}_3$, then
$$\lim\limits_{a\rightarrow0}\widehat{B}=\frac{1}{2}\frac{2-\left(\frac{s_1+s_2}{s_1s_2}+\frac{s_1+s_2}{s_2^2(s_1-1)}\right)}{1-\left(\frac{s_1+s_2}{s_1s_2}+\frac{s_1+s_2}{s_2^2(s_1-1)}\right)} \ ,$$
which is not 0. Since $\{\widehat{f}_{h}\}$ are uniformly bounded, we conclude that the normalized $\{\widehat{f}_{h}\}$ are also uniformly bounded.

Now, we will show that the normalized $\{\widehat{f}_l\}$ are also uniformly bounded. To this end, let us notice that
 \begin{eqnarray*}
\widehat{f}_h-\widehat{f}_l&=&(1+\frac{s_1+pa}{s_2+qa})\Lambda_hg_l-(1+\frac{s_1+pa}{s_2+qa})\Lambda_lg_h\\
&=& (1+\frac{s_1+pa}{s_2+qa})(\Lambda_h-\Lambda_l)g_l-\Lambda_l \frac {1+\frac{s_1+pa}{s_2+qa}}{(s_2+qa)(s_1+pa-1)(s_1+pa)^{k_1-1}}\\
&=&
(1+\frac{s_1+pa}{s_2+qa})\frac{\frac{\eta}{(s_1+pa)^{k_1-1}}}{[1-(\kappa+\eta)][1-\kappa-\eta(1-\frac{1}{(s_1+pa)^{k_1-1}})]}g_l\\
& & -\Lambda_l \frac {1+\frac{s_1+pa}{s_2+qa}}{(s_2+qa)(s_1+pa-1)(s_1+pa)^{k_1-1}}\ ,
\end{eqnarray*}
where $|g_l|\le \frac{1}{s_1}+\frac{1}{s_2(s_1-1)}$ and $\lim\limits_{a\to 0} \Lambda_l=\frac{1}{1-\left(\frac{s_1+s_2}{s_1s_2}+\frac{s_1+s_2}{s_2^2(s_1-1)}\right)}$.
Thus,
$ \lim\limits_{a\to 0} \widehat{f}_h-\widehat{f}_l=0\ .$
We conclude that the normalized $\{\widehat{f}_{l}\}$ are uniformly bounded since the normalized $\{\widehat{f}_{h}\}$ are uniformly bounded. Thus, after normalization, $\{f_a\}$ are also uniformly bounded.

\textbf{Case (ii): $\Lambda_l>0$}:

This case implies that $f_a$ given by (\ref{simple_fa}) has the following properties:
\begin{equation}\label{inequality11}
  f_a\geq 1\ ,
\end{equation}
and all the coefficients of the characteristic functions appearing in (\ref{simple_fa}) are positive. We note that $\Lambda$ is always positive for small $a$. Thus,
\begin{equation}\label{L:upper}
 f_a\leq 1+ (1+\frac{s_1+pa}{s_2+qa})\Lambda\sum _{n=1}^\infty\frac{1}{|\beta(1/2,n)|}\ ,
\end{equation}
which is finite since our maps $\{W_a\}$ are expanding. In view of (\ref{inequality11}), we conclude that the normalized $\{f_a\}$ are uniformly bounded.

If $\vartheta=1-\left(\frac{s_1+s_2}{s_1s_2}+\frac{s_1+s_2}{s_2^2(s_1-1)}\right)=0$, then we have $\lim\limits_{a\rightarrow0}\frac{1}{\Lambda_l}=\lim\limits_{a\rightarrow0}\frac{1}{\Lambda_h}=0$, $\Lambda_l$ and $\Lambda_h$ are still of the same sign. We can renormalize $f_a$. Let us take the $\widehat{f}_h$ as an example. Multiplying it by $\frac{1}{\Lambda_h}$, we obtain
\begin{eqnarray*}
\frac{1}{\Lambda_h}\widehat{f}_h
&=& \left({\frac{s_1+s_2+pa+qa}{(s_2+qa)(s_1+pa)}}+\frac{1}{\Lambda_h}\right)\chi_1+\frac{s_1+s_2+pa+qa}{(s_2+qa)^2}\sum _{j=2}^{k_1}\frac{\chi_{j}}{(s_1+pa)^{j-1}}\\
&& + \left(\frac{s_1+s_2+pa+qa}{(s_2+qa)^2}\frac{1-\frac{1}{(s_1+pa)^{k_1-1}}}{s_1+pa-1}+\frac{1}{\Lambda_h}\right)\chi_c\ .
\end{eqnarray*}
Note that the coefficients of $\chi_1$ and $\chi_c$ converge to $\frac{s_1+s_2}{s_1s_2}$ and $\frac{s_1+s_2}{s_2^2(s_1-1)}$, respectively. Thus, $\{\int_0^1\frac{1}{\Lambda_h}\widehat{f}_h\ d\lambda\}$ are separated from 0. This implies $\{\frac{1}{\Lambda_h}\widehat{f}_h\}$ are uniformly bounded. A similar procedure can be applied to $\widehat{f}_l$. We conclude that $\{\frac{1}{\Lambda}f_a\}$ are uniformly bounded.

\section{Example} \label{S:example}
One of the important properties of a piecewise expanding transformation of an interval is that its invariant density is bounded away from 0 on its support. The following result was proved, by Keller \cite{K78} and by Kowalski \cite{Ko}.
\begin{theorem}\label{kk}
Let a transformation $\tau: I\rightarrow I$ be piecewise expanding with $\frac{1}{|\tau'(x)|}$ a function of bounded variation, and let $f$ be a $\tau$-invariant density which can be assumed to be lower semicontinuous. Then there exists a constant $c>0$ such that $f|_{\emph{supp}\ f}>c$.
\end{theorem}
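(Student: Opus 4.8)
The plan is to combine the bounded-variation theory of piecewise expanding maps with a direct ``spreading'' estimate for the Perron--Frobenius (transfer) operator $P$ of $\tau$. Recall that $Pf=f$ and, for every $n$,
\[
f(x)=(P^{n}f)(x)=\sum_{y\in\tau^{-n}(x)}\frac{f(y)}{|(\tau^{n})'(y)|}.
\]
The first step is to collect the structural facts available for free (see \cite{BG}): since $1/|\tau'|$ is of bounded variation, $f$ may be taken of bounded variation, hence $0\le f\le M<\infty$; the support $S:=\operatorname{supp}f$ is a finite union of closed intervals; and $\mu=f\,d\lambda$ is a finite convex combination of ergodic acim's, each supported on a finite union of intervals. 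Because $S$ is the union of those finitely many ergodic supports, and because the constant $c$ may be taken to be the minimum of the constants obtained on each of them, I would reduce at once to the case that $\mu$ is ergodic and $S=\operatorname{supp}\mu$.

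The second step produces a \emph{seed}. Since $f$ is of bounded variation with $\int_{S}f\,d\lambda=1$, the super-level sets $\{f\ge\varepsilon\}$ are (essentially) finite unions of intervals and have positive measure for small $\varepsilon>0$; hence there is a closed subinterval $I_{0}\subseteq S$ of positive length and a constant $\delta_{0}>0$ with $f\ge\delta_{0}$ a.e.\ on $I_{0}$. The crucial geometric input is a covering statement: there is $N_{0}\in\mathbb N$ with $\bigcup_{n=0}^{N_{0}}\tau^{n}(I_{0})\supseteq S$. To obtain it I would use that $\tau$ is expanding, so the length of $\tau^{n}(I_{0})$ grows by a definite factor at each step until some image $\tau^{n_{1}}(I_{0})$ contains an entire component of $S$; ergodicity of $\mu$ then forces the forward orbit of that component to exhaust the finitely many intervals making up $S$ after boundedly many further iterations.

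The third step extracts the lower bound. Fix $x\in S$; by the covering statement there are $n\le N_{0}$ and a preimage $y\in I_{0}$ with $\tau^{n}(y)=x$ lying on a single monotone branch of $\tau^{n}$. Retaining only this term in the transfer-operator identity,
\[
f(x)=\sum_{z\in\tau^{-n}(x)}\frac{f(z)}{|(\tau^{n})'(z)|}\ \ge\ \frac{f(y)}{|(\tau^{n})'(y)|}\ \ge\ \frac{\delta_{0}}{\Lambda^{N_{0}}}\ =:\ c>0,
\]
where $\Lambda:=\sup|\tau'|<\infty$ because $\tau$ is $C^{1}$ on each of finitely many closed pieces. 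This yields $f\ge c$ a.e.\ on $S$, and lower semicontinuity of $f$ then upgrades this to a pointwise statement on (the interior of) $S$, which is what is claimed.

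I expect the genuine obstacle to be the uniform bound on $|(\tau^{n})'(y)|$ in the last display. When $\tau'$ is bounded, the crude estimate $|(\tau^{n})'(y)|\le\Lambda^{N_{0}}$ suffices; but the hypothesis is only $1/|\tau'|\in BV$, which permits $\tau'$ to be unbounded, and then the cumulative slope along the chosen orbit is not controlled by $N_{0}$ alone. Handling this is precisely the content of \cite{K78} and \cite{Ko}: one must keep several preimage branches and exploit the bounded distortion that $1/|\tau'|\in BV$ provides, together with the uniform Lasota--Yorke bound $\sup_{n}\|P^{n}\mathbf{1}\|_{BV}<\infty$, in order to locate, for each $x\in S$, a preimage branch of some $\tau^{n}$ defined on a subinterval of $I_{0}$ of length bounded below, on which $|(\tau^{n})'|$ is then bounded above. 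A lesser but real technical point is making the covering step uniform across all components of $S$, which is where I would lean hardest on the structural results of \cite{BG}.
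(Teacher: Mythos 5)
You should first note that the paper contains no proof of Theorem \ref{kk} to compare against: it is quoted as a known result of Keller \cite{K78} and Kowalski \cite{Ko}. Judged on its own terms, your proposal correctly lays out the standard skeleton (reduce to an ergodic component, find a seed interval $I_0$ with $f\ge\delta_0$ there, cover the support by finitely many images of $I_0$, and push the seed forward through $f=P^nf$), but the decisive step is missing. The inequality $f(x)\ge\delta_0/\Lambda^{N_0}$ with $\Lambda=\sup|\tau'|$ is exactly where the hypothesis matters: $1/|\tau'|\in BV$ forces $1/|\tau'|$ to be bounded, hence $|\tau'|$ bounded away from $0$, but it does \emph{not} force $|\tau'|$ to be bounded above (branches with a vertical tangent at an endpoint are admissible in this class), so $\Lambda$ may be infinite and the single-branch estimate collapses. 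You identify this yourself and then write that handling it ``is precisely the content of \cite{K78} and \cite{Ko}''; at that point the proposal stops being a proof and becomes a citation of the result it is meant to establish. The genuine work --- producing, for each $x$ in the support, a monotone branch of some $\tau^n$ whose domain is a subinterval of $I_0$ of length bounded below, equivalently controlling $|(\tau^n)'|$ on that branch via the variation of $1/|(\tau^n)'|$ and the Lasota--Yorke inequality --- is asserted but not carried out.

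A secondary but real gap is the covering claim $\bigcup_{n\le N_0}\tau^n(I_0)\supseteq S$. Your justification, that $|\tau^n(I_0)|$ grows by a definite factor at each step, fails whenever the current image straddles a partition point: the image splits into two pieces and the longer one is only guaranteed length at least $\tfrac12\inf|\tau'|$ times the previous length, which grows only if $\inf|\tau'|>2$. (This is precisely why the paper's own Proposition 1, for its piecewise linear family, must argue separately around the turning fixed point when the images first reach $1/2$.) The standard repair is to pass to an iterate $\tau^m$ with $\inf|(\tau^m)'|>2$ and use the structure of the support of an ergodic component as in \cite{BG}; you should at least say this. Neither gap invalidates the strategy --- it is the right one, and essentially the one Keller and Kowalski implement --- but as written your argument proves the theorem only under the extra hypotheses $\sup|\tau'|<\infty$ and $\inf|\tau'|>2$, which happen to hold for the piecewise linear maps used in this paper but not for the class in the statement.
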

We provide an example showing that this result cannot be generalized to a family of expanding maps, even if they all have this property and converge to a limit map also with this property. Let $d(\cdot,\cdot)$ be the metric on the weak topology of measures.
\begin{example}
Let us fix
$$s_1=s_2=2, \ p=q=1.$$
For small $a>0$, let $W_{a,r}$ denote the $W_a$ maps with varying parameter $r$, and let $\mu_{a,r}$ denote the absolutely continuous invariant measure of $W_{a,r}$. We know that $\mu_{a,r}$ is supported on $[0,1]$ and $W_{a,r}$ with $\mu_{a,r}$ is exact. Using Theorem \ref{main}, we know that  $\{\mu_{a,r}\}$ converge $*$-weakly to the
 measure
 $$\mu_{0,r}=\frac{1}{1+2r}\mu_{0}+\frac{2r}{1+2r}\delta_{\frac 12}.$$
Let $r_n=n$, $n=1,2,3,\cdots$. Also, let $\{a_n\}_1^{\infty}$ satisfy $r_na_n<1/2$ and be so small that
$$d(\mu_{a_n,r_n},\mu_{0,r_n})<\frac 1n\ .$$

Now, for the family of maps $\tau_n=W_{a_n,r_n}$, $n=1,2,3,\cdots$, $\tau_n$  converge to $W_0$ with $|\tau'_n(x)|>2$, but the invariant densities $\mu_{a_n,r_n}$ converge to $\delta_{(\frac 12)}$. This implies that the invariant densities $\{f_{a_n,r_n}\}$ corresponding to $\{\mu_{a_n,r_n}\}$ have no uniform positive lower bound.
\end{example}
\bigskip
\textbf{Acknowledgment:} The author is grateful to Dr. P. G\'ora and Dr. A. Boyarsky, for  inspiring the author and for help with this paper. He would also like to thank the members of the dynamical system seminars at Concordia University for helpful discussions.

\end{document}